\numberwithin{equation}{section}
\newtheorem{thm}{Theorem}[section]
\newtheorem{prop}[thm]{Proposition}
\newtheorem*{thm*}{Theorem}%[section]
\theoremstyle{definition}
\newtheorem{defi}[thm]{Definition}
\newtheorem{asump}[thm]{Assumption}
\newtheorem{xrem}[thm]{Remark}
\newtheorem{exm}[thm]{Example}
\newtheorem{question}[thm]{Question}
\DeclareMathOperator{\rank}{{rank}}
\DeclareMathOperator{\Aut}{{Aut}}
\DeclareMathOperator{\Gal}{{Gal}}
\DeclareMathOperator{\red}{{red}}
\DeclareMathOperator{\mult}{{mult}}
\DeclareMathOperator{\Bl}{{Bl}}
\begin{document}
\baselineskip=16pt

\title[Seshadri constants of parabolic vector bundles]{Seshadri constants of parabolic 
vector bundles}

\author[I. Biswas]{Indranil Biswas}

\address{School of Mathematics, Tata Institute of Fundamental
Research, Homi Bhabha Road, Mumbai 400005, India}

\email{indranil@math.tifr.res.in}

\author[K. Hanumanthu]{Krishna Hanumanthu}

\address{Chennai Mathematical Institute, H1 SIPCOT IT Park, Siruseri, Kelambakkam 603103, 
India}

\email{krishna@cmi.ac.in}

\author[S. Misra]{Snehajit Misra}

\address{Chennai Mathematical Institute, H1 SIPCOT IT Park, Siruseri, Kelambakkam 603103, 
India}

\email{snehajitm@cmi.ac.in}

\author[N. Ray]{Nabanita Ray}

\address{Chennai Mathematical Institute, H1 SIPCOT IT Park, Siruseri, Kelambakkam 603103, 
India}

\email{nabanitar@cmi.ac.in}

\subjclass[2010]{14C20, 14J60}

\keywords{Parabolic bundle, Seshadri constant, nef cone, ample cone, multiplicity}

\date{}

\begin{abstract}
Let $X$ be a complex projective variety, and let $E_{\ast}$ be a parabolic vector bundle on 
$X$ with parabolic structure on a divisor. We introduce the notion of \textit{parabolic Seshadri constants} of $E_{\ast}$. 
It is
shown that these constants are analogous to the classical Seshadri constants of vector 
bundles, in particular they have parallel definitions and properties. We prove a 
Seshadri criterion for parabolic ampleness of $E_{\ast}$ in terms of the parabolic Seshadri 
constants of it. We also compute parabolic Seshadri constants for symmetric powers and tensor 
products of parabolic vector bundles.
\end{abstract}

\maketitle

\tableofcontents

\section{Introduction}\label{intro}

Let $X$ be a complex projective variety, and let $L$ be a nef line bundle on
$X$. For a point $x \in X$, the \textit{Seshadri constant} of $L$ at $x$,
denoted by $\varepsilon(X,\,L,x)$, is defined
to be
$$\varepsilon(X,\,L,x)\,:=\, \inf\limits_{\substack{x \in C}}\, \frac{L\cdot
C}{{\rm mult}_{x}C}\, ,$$
where the infimum is taken over all irreducible and reduced curves $C\,\subset\, X$
passing through $x$; here 
$L\cdot C$ denotes the intersection number (equivalently, the degree of $L$
restricted to $C$), while ${\rm mult}_x C$ denotes the 
multiplicity of the curve $C$ at $x$. 

Alternatively, we can define Seshadri constants as follows. 
Let $\pi\,:\, \widetilde{X}\,\longrightarrow\, X$ be the blow up of $X$ at $x$, and
let $A$ denote the corresponding exceptional divisor. Then
$$\varepsilon(X,\,L,x)\, =\,
{\rm sup}~\{\lambda \,\ge\, 0 \ \big\vert\ \, \pi^{\ast}(L)-\lambda A\,\ ~\rm{is~ nef}\}.$$

This constant was first introduced by Demailly in \cite{Dem90} while he was studying 
problems related to generation by jets and Fujita's Conjecture. The name originated from an 
ampleness criterion of Seshadri \cite[Theorem I.7.1]{Har70}. Seshadri constant of line 
bundles blossomed into a very active area of research, especially in connection with 
the positivity properties of the line bundle $L$. Seshadri constants for vector bundles of 
arbitrary rank have also been studied. They were defined in \cite{Hac00} and studied 
in depth in \cite{FM21}. See \cite{HMP10,BHN20, DKS22,MR22} for some results on Seshadri 
constants of vector bundles.

A \textit{parabolic vector bundle} on a curve is a vector bundle $E$ with some additional 
structures given by filtrations of certain fibers of $E$ along with weights attached to these 
filtrations (see Definition \ref{defn-parabolic}). We recall that parabolic vector bundles
were introduced by Seshadri (see \cite{Ses77}) while studying certain representations of the 
fundamental groups of punctured Riemann surfaces; they were further studied
by Mehta and Seshadri in \cite{MS80}. The notion of parabolic vector bundle was generalized
to higher dimensions by Maruyama and Yokogawa in \cite{MY92}.

There has been a lot of research on parabolic vector bundles, starting with a description 
of their moduli spaces in \cite{MS80,MY92}. In \cite{B97II}, certain orbifold bundles were 
associated to parabolic bundles (see also \cite{Bo1}, \cite{Bo2}). The notion of a 
\textit{ramified} principal ${\rm GL}_n(\mathbb{C})$-bundle, which generalizes the notion 
of principal ${\rm GL}_n(\mathbb{C})$-bundles, was introduced in \cite{BBN03}. It was shown 
there that parabolic vector bundles can be viewed as ramified ${\rm GL}_n ({\mathbb 
C})$-bundles. Using this identification, \cite{BL11} constructed a projectivization of a 
parabolic vector bundle as well as the tautological line bundle on the projectivization.

Many positivity properties, such as ampleness of line bundles, have been generalized to 
vector bundles. Some of these notions have also been generalized to the
context of parabolic vector bundles. 
For example, the notion of ample parabolic bundles was defined in \cite{B97I}. The 
ampleness of a parabolic vector bundle is characterized by the ampleness of the 
tautological bundle on the projectivization associated to that parabolic bundle (see 
\cite{BL11}).

Proceeding further in this direction, it is natural to ask whether other positivity notions 
can also be generalized to the set-up of parabolic vector bundles. As remarked earlier, 
Seshadri constant is an important invariant in the study of positivity. In this paper, we 
generalize the notion of Seshadri constants to parabolic vector bundles on projective 
varieties. Crucial ingredients in our construction are orbifold bundles associated to the
parabolic bundles and projectivization of parabolic bundles. Many of our results and proofs 
are motivated by analogous results of Fulger and Murayama in \cite{FM21}.

In Section \ref{prelims}, we recall the definition and basic properties of parabolic vector 
bundles that we use. They are included for the sake of completeness and also for the 
convenience of the reader.

Let $X$ be an irreducible projective variety and 
let  $E_{\ast}$ be a  parabolic vector bundle on $X$. 
In Section \ref{defn-sc}, we give the definition of the parabolic Seshadri constant  $\varepsilon_{\ast}(E_{\ast},\,x)$ 
of  $E_{\ast}$ at any point $x \in X$; see Definition \ref{main-defn}. 
This definition makes use of the notion of projectivization of a parabolic bundle; see Section \ref{proj} for a description of 
this construction.  Alternately, one can also use the notion of 
orbifold bundles (described in Section \ref{orbifold}) to define parabolic Seshadri constants.

Different alternative characterizations of parabolic Seshadri constants are given in Theorem \ref{corl2.1} and 
Theorem \ref{thm2.2}.

We also prove an analogue of the Seshadri criterion for ampleness for parabolic
bundles (see Theorem \ref{seshadri-crit}).  We state this criterion here for the convenience of the reader. See Section \ref{defn-sc} for the notation. 

\begin{thm*}[Theorem \ref{seshadri-crit}]%[{Seshadri criterion for parabolic ampleness}]\label{seshadri-crit}
Let $E_{\ast}$ be a parabolic vector bundle on a smooth irreducible projective variety 
$X$ such that the numerical class $\xi' \,\equiv\, \mathcal{O}_{\mathbb{P}(E')}\bigl(N(E_*)\bigr)$ is $\tau$-ample. Then
$E_{\ast}$ is parabolic ample if and only if $$\inf\limits_{x\in 
X}\varepsilon_{\ast}(E_{\ast},\,x)\,>\, 0,$$ where the infimum is taken over all points of $X$.
\end{thm*}

An 
upper bound for parabolic Seshadri constants is given in Theorem \ref{bound}.

In Section \ref{properties-sc}, we prove several properties of parabolic Seshadri constants. We define
the notion of multi-point Seshadri constants for the usual vector bundles and relate it to
the parabolic Seshadri constants.

One of the results proved in this section describes
how parabolic Seshadri constants can be computed using 
restriction to curves.
\begin{thm*}[Theorem \ref{thm3.2}]
Let $E_{\ast}$ be a parabolic nef vector bundle on a smooth irreducible complex projective
variety $X$, and let $E'\,\longrightarrow\, Y$ be the corresponding orbifold bundle over
$Y$ (see Section \ref{orbifold}). Then 
\begin{align*}
\varepsilon_{\ast}(E_{\ast},\,x) \,\,=\,\, N(E_{\ast})\cdot\inf\limits_{C\subset Y}
\left\{\frac{\mu_{\min}(\nu^{\ast}E')}{\sum\limits_{y\in \gamma^{-1}(x)}\mult_y C}\right\},
\end{align*}
where the infimum is taken over all irreducible curves $C\,\subset\, Y$ such that
$C\bigcap \gamma^{-1}(x) \,\neq \,\emptyset$, and $\nu \,:\, \overline{C}
\,\longrightarrow\, C$ is the normalization map.
\end{thm*}

We also compute parabolic Seshadri constants of symmetric powers of
parabolic vector bundles (see Theorem \ref{thm-sym}) as well as tensor products of parabolic vector
bundles (see Theorem \ref{thm-tensor}). These results are stated below.

\begin{thm*}[Theorem \ref{thm-sym}]
Let $E_{\ast}$ be a parabolic nef vector bundle on a smooth irreducible complex
projective variety $X$. Then for any positive integer $m$
$$\varepsilon_{\ast}(S^m(E_{\ast}),\,x)\,=\, m\varepsilon_{\ast}(E_{\ast},\,x)$$ for every
point $x\,\in\, X$.
\end{thm*}

\begin{thm*}[Theorem \ref{thm-tensor}]
Let $E_{\ast}$ and $F_{\ast}$ be two parabolic nef vector bundles on a smooth irreducible
complex projective variety $X$ having a common parabolic divisor $D\,\subset\, X$. Then for
every point $x\,\in\, X$,
\begin{align*}
\varepsilon_{\ast}(E_{\ast}\otimes F_{\ast},\,x)\,\,\geq\,\, N(E_{\ast}\otimes F_{\ast})
\cdot\left\{\frac{\varepsilon_{\ast}(E_{\ast},\,x)}{N(E_{\ast})}\,+\,
\frac{\varepsilon_{\ast}(F_{\ast},\,x)}{N(F_{\ast})}\right\}.
\end{align*}
\end{thm*}

Some examples and questions are mentioned in Section \ref{examples}. 

We work throughout over the field $\mathbb{C}$ of complex numbers. The field of real 
numbers is denoted by $\mathbb{R}$.

\section{Preliminaries}\label{prelims}

In this section, we briefly recall the definitions, 
properties and constructions associated to parabolic vector bundles; more details can be 
found in \cite{B97I,B97II,B98,BS99,BBN03,BL11}.

\subsection{Parabolic sheaves}\label{defn-parabolic-sheaves}

We start by defining the parabolic vector bundles. 

\begin{defi} \label{defn-parabolic}
Let $X$ be a connected smooth complex projective variety of dimension $d$, and let $D \,\subset\, X$ be an effective divisor on $X$.
\begin{enumerate} 
\item Let $E$ be a torsion-free coherent $\mathcal{O}_X$-module. A \textit{quasi-parabolic structure} on $E$ with respect to $D$ is a filtration by $\mathcal{O}_X$-coherent subsheaves
\begin{equation}\label{ef}
E \,=\, \mathcal{F}_1(E) \,\supset\, \mathcal{F}_2(E) \,\supset\, \cdots\, \supset\, \mathcal{F}_l(E) \,\supset\, \mathcal{F}_{l+1}(E) \,=\, E(-D),
\end{equation}
where $E(-D) \,:= \,E\otimes_{\mathcal{O}_X}\mathcal{O}_X(-D)$. The above integer $l$ is
called the \textit{length} of the filtration.

\item A \textit{parabolic structure} on $E$ with respect to $D$ is a quasi-parabolic 
structure as above together with a system of \textit{weights} $\{ 
\alpha_1,\,\alpha_2,\,\cdots,\,\alpha_l\}$, where each $\alpha_i$ is a real number such 
that $0 \,\leq \,\alpha_1\,<\,\alpha_2\,<\,\cdots \,<\,\alpha_{l-1} \,<\,\alpha_l\,<\,1$.
The numbers $\{\alpha_i\}_{i=1}^l$ are called \textit{parabolic weights} and we say that 
$\alpha_i$ is attached to $\mathcal{F}_i(E)$.

\item A \textit{parabolic sheaf} is a torsion-free coherent $\mathcal{O}_X$-module $E$ with 
a parabolic structure.

We denote a parabolic sheaf by the triple $\bigl( E ,\, 
\mathcal{F}_{\ast},\,\alpha_{\ast}\bigr)$, where $\mathcal{F}_{\ast}$ denotes the above 
filtration and $\alpha_{\ast}$ denotes the system of parabolic weights. When 
$\mathcal{F}_{\ast}$ and $\alpha_{\ast}$ are clear from the context, we denote
$\bigl( E ,\, \mathcal{F}_{\ast},\,\alpha_{\ast}\bigr)$ by $E_{\ast}$.

\item If $E$ is a vector bundle on $X$, then $E_{\ast}$ is called a \textit{parabolic vector bundle with parabolic divisor $D$}. The \textit{rank} of a parabolic vector bundle is simply the rank of the underlying vector bundle $E$. 
\end{enumerate}
\end{defi}
For any parabolic sheaf $E_{\ast}$ defined as above, and for any $t\,\in\, \mathbb{R}$,
define the following filtration $\{E_t\}_{t\in \mathbb{R}}$
of coherent sheaves parametrized by $\mathbb{R}$:
\begin{align*}
 E_t\, =\, \mathcal{F}_i(E)(-[t]D),
\end{align*}
where $[t]$ is the integral part of $t$ and $\alpha_{i-1} \,<\, t-[t] \,\leq\, \alpha_i$ 
with $\alpha_0 \,=\, \alpha_l-1$ and $\alpha_{l+1} \,=\, 1$. The filtration $\{E_t\}_{t\in 
\mathbb{R}}$ evidently determines the parabolic structure $\bigl( E ,\, 
\mathcal{F}_{\ast},\,\alpha_{\ast}\bigr)$ uniquely. Note that any coherent subsheaf $M\, 
\subset\,E$ has an induced parabolic structure such that the corresponding filtration 
$\{M_t\}_{t\in \mathbb{R}}$ is defined by $M_t \,:= \,(E_{t-[t]}\bigcap M)([t]D)$.

Consider the decomposition 
\begin{align*}
D \,= \,\sum\limits_{j=1}^{n} b_jD_j,
\end{align*}
where every $D_j$ is a reduced irreducible divisor, and $b_j \,\geq\, 1$. Let 
\begin{align*}
f_j \,:\, b_jD_j\,\longrightarrow\, X
\end{align*}
denote the inclusion map of the subscheme $b_jD_j$. For each $1\,\leq\, j\,\leq\, n$,
choose a filtration
\begin{align}\label{seq1}
 0 \,=\, F_{l_j+1}^j\, \subset\, F_{l_j}^j\,\subset\, F_{l_j-1}^j\,\subset\, \cdots \,\subset\, F_1^j = f_j^{\ast}E\, .
\end{align}
Fix real numbers $\alpha_k^j$,\, $1\,\leq\, k\,\leq \, l_j+1$, such that
\begin{align*}
1 \,=\, \alpha_{l_j+1}^j \,> \,\alpha_{l_j}^j \,>\, \alpha_{l_j-1}^j\, >\, \cdots \,>\, \alpha_2^j \,> \,\alpha_1^j \,\geq\, 0.
\end{align*}
For every $1\,\leq\, j\,\leq\, n$ and $1\,\leq\, k\,\leq \, l_j+1$, define the coherent
subsheaf $\overline{F}_j^i\, \subset\, E$
using the following short exact sequences:
\begin{align*}
 0 \,\longrightarrow \,\overline{F}_k^j \,\longrightarrow\, E \,\longrightarrow\, (f_j^{\ast}E)/F_k^j \,\longrightarrow\, 0.
\end{align*}
For $1\,\leq\, j\,\leq\, n$ and $0 \,\leq\, t \,\leq\, 1$, let $l_t^j$ be the smallest number in the set of integers 
$$
\bigl\{ k \,\in\, \{1,\,2,\,\cdots,\,l_j+1\} \,\,\big\vert\,\, \alpha_k^j \,\geq\, t\bigr\}.
$$
Finally, set
\begin{align}\label{z1}
E_t \,:=\, \bigcap_{j=1}^n \overline{F}_{l_{t}}^j \, \subseteq\, E.
\end{align}
The filtration $\{ E_t\}_{t\in \mathbb{R}}$ in \eqref{z1} defines a parabolic structure on $E$. It is
straightforward to check that all parabolic structures on $E$, with $D$ as the parabolic divisor, arise 
this way.

Let
\begin{equation}\label{ep}
\phi\,:\,X\setminus D\,\longrightarrow\,X
\end{equation}
denote the inclusion map of the complement
of $D$. Let $E_{\ast}$ and $W_{\ast}$ be two parabolic sheaves on $X$, with
$D$ as the parabolic divisor, such that $E_0\big\vert_{X\setminus D}$ and
$W_0\big\vert_{X\setminus D}$ are locally free. For any $c\,\in\, \mathbb{R}$,
define $M_c$ to be the subsheaf of $\phi_{\ast}\phi^{\ast}(E\otimes W)$ generated by all
$E_s\otimes W_t$ with $s+t \,\geq\, c$, where $\phi$ is the map in \eqref{ep}.
The parabolic sheaf given by the filtration $\bigl\{M_c\bigr\}_{c\in \mathbb{R}}$ is 
called the {\it parabolic tensor product} of $E_{\ast}$ and $W_{\ast}$, and it is denoted 
by $E_{\ast}\otimes W_{\ast}$. The \textit{parabolic $m$-fold symmetric product} 
$S^m(E_{\ast})$ is the invariant subsheaf of the $m$-fold parabolic tensor product of 
$E_{\ast}$ for the natural action of the permutation group for the factors of the tensor 
product. The underlying sheaf of the parabolic sheaf $S^m(E_{\ast})$ will be 
denoted by $S^m(E_{\ast})_0$.

\begin{defi}[{\cite[Definition 2.3]{B97I}}]
A parabolic bundle $E_{\ast}$ is called \textit{parabolic ample} if for any coherent
sheaf $F$ on $X$ there is an integer $m_0$ such that for any $m\,\geq\, m_0$, the tensor
product $F \otimes S^m(E_{\ast})_0$ is globally generated.
\end{defi}

\begin{defi}[{\cite[Definition 3.2]{BS99}}]
A parabolic vector bundle $E_{\ast}$ is called \textit{parabolic nef} if there is
an ample line bundle $L$ over $X$ such that $S^m(E_{\ast})\otimes L$ is parabolic
ample for every positive integer $m$.
\end{defi}

\subsection{Semistability of parabolic bundles}\label{semistability}

We fix an ample line bundle $L$ on $X$. For any coherent $\mathcal{O}_X$-module $E$, the 
\textit{degree of $E$ with respect to $L$} is defined to be
\begin{equation}\label{ed}
\deg(E) \,:= \,\bigl(c_1(E) \cup c_1(L)^{d-1}\bigr) \cap [X] \,\in\, \mathbb{Z}
\end{equation}
(see \cite{MY92}). The {\it parabolic degree of $E_{\ast}$ with respect to $L$}, denoted by
${\rm par\_deg}(E_{\ast})$, is defined as follows:
$$
{\rm par\_deg} (E_{\ast})\,:=\, \int_{-1}^{0}\deg(E_t) \,dt\, \in\, \mathbb R .
$$
The {\it parabolic slope} of $E_{\ast}$, denoted by ${\rm par}\_\mu(E_{\ast})$, is the
quotient $\frac{{\rm par\_deg}(E_{\ast})}{{\rm rank}(E)}\,\in\, \mathbb R$.

\begin{defi}
A parabolic sheaf $E_{\ast}$ is called \textit{parabolic semistable} (respectively, 
\textit{parabolic stable}) if for every 
subsheaf $M$ of $E$ such that $ 0 \,<\,\rank(M) \,< \,\rank(E)$, the following inequality
is satisfied:
$${\rm par}\_\mu(M_{\ast}) \,\leq\, {\rm par}\_\mu(E_{\ast})\ \ \ (\text{respectively, }\,~
{\rm par}\_\mu(M_{\ast}) \,< \, {\rm par}\_\mu(E_{\ast}) ).$$
 \end{defi}
 
\begin{asump}\label{assumption}
Henceforth we will always impose the following four conditions on the parabolic bundles
$E_{\ast}$, with parabolic divisor $D$, that we will consider:
\begin{enumerate}[(a)]
 \item{ The parabolic divisor $D \,=\, \sum\limits_{i=1}^n b_iD_i$ is a normal crossing 
divisor, i.e., all $b_i\,=\,1$ and $D_i$ are smooth divisors and they intersect 
transversally.}
 
\item{ All $F_j^i$ on $D_i$ in sequence (\ref{seq1}) are subbundles of $f_i^{\ast}E$ for every $i$.}

\item{ All the weights $\alpha_j^i$ are rational numbers; so $\alpha_j^i \,=\, m_j^i/N$, where
$N$ is some fixed integer and $m_j^i \,\in\, \{ 0,\,1,\,\cdots,\, N-1\}$.}

\item{ Every point $x$ of $D$ has a neighborhood $U_x\, \subset\, X$, and
a decomposition of $E\big\vert_{U_x}$ into a direct sum of line bundles, such that
the filtration of all $E\big\vert_{U_x\cap D_i}$, $1\, \leq\, i\,\leq\, n$, are
constructed using the decomposition. To explain this condition in detail, let $D_{i_1},\, \cdots,\, D_{i_x}$ be
the irreducible components of $D\,=\, \sum_{i=1}^n D_i$ that contain the point $x$. For every $k\, \in\, {i_1,\, \cdots,\,
i_x}$, consider the filtration $\{F_j^k\}_{j=1}^{l_k}$ of $f_k^{\ast}E \,=\, E\big\vert_{D_k}$ in sequence \eqref{seq1} (recall
that $b_k\,=\,1$ by (a)). The condition says that there is a holomorphic decomposition into a direct sum of line bundles
$$
E\big\vert_{U_x}\, =\, \bigoplus_{\beta =1}^{r_E} L_\beta,
$$
where $r_E\,=\, \text{rank}(E)$, such that every $F_j^k\big\vert_{U_x\cap D_k}$ considered above
is of the form
$$
F_j^k\big\vert_{U_x\cap D_k}\,=\, \bigoplus_{\alpha =1}^{r_{k,j}} L_{d_\alpha}\big\vert_{U_x\cap D_k},
$$
where $r_{k,j}\,=\, \text{rank}(F_j^k)$ and $1\, \leq\, d_1 \, <\, d_2\, <\, \cdots\, <\, d_{r_{k,j}}\, \leq\, r_E$.}
\end{enumerate}
\end{asump}
 
\subsection{Orbifold bundles}\label{orbifold}

Let $Y$ be a smooth complex projective variety; its group
of algebraic automorphisms will be denoted by $\Aut(Y)$. Let $\Gamma$ be a finite group and $\psi \,:\, \Gamma
\,\longrightarrow\, \Aut(Y)$ a homomorphism giving an action of $\Gamma$ on $Y$. 

\begin{defi}
An \textit{orbifold bundle} on $Y$, with $\Gamma$ as the \textit{orbifold group}, is a 
vector bundle $V$ on $Y$ together with a lift of the action of $\Gamma$ on $Y$ to $V$, i.e.,
$\Gamma$ acts on the total space of $V$ such that the action of any $g\,\in\, \Gamma$
gives a vector bundle isomorphism between $V$ and $\psi(g^{-1})^{\ast}V$. A subsheaf $F$ of
an orbifold bundle $V$ is called an \textit{orbifold subsheaf} if the 
action of $\Gamma$ on $V$ preserves $F$. A homomorphism $V\, \longrightarrow\, V'$ between orbifold bundles
is a homomorphism $V\, \longrightarrow\, V'$ between the vector bundles that commutes with the actions of
$\Gamma$ on $V$ and $V'$. 
\end{defi}

Let $\widetilde{L}$ be an orbifold line bundle on $Y$ which is also ample. Then we can 
define the degree of any coherent sheaf on $Y$ using $\widetilde{L}$ as done in \eqref{ed}.

\begin{defi}
An orbifold bundle $V$ on $Y$ is called \textit{orbifold semistable}
(respectively, \textit{orbifold stable}) if for any orbifold subsheaf
$F$ of $V$ with $0 \,<\,\rank(F) \,< \,\rank(E)$ the following inequality holds:
$$
\frac{\deg(F)}{\rank(F)} \,\leq\, \frac{\deg(V)}{\rank(V)}\ \ ~ \left( \rm respectively,\,\
\frac{\deg(F)}{\rank(F)}
\,<\, \frac{\deg(V)}{\rank(V)}\right).
$$
\end{defi}

Now let $X$ be a smooth projective complex variety, and let $D$ be an effective divisor on 
$X$ satisfying Assumption \ref{assumption}(a). Fix an integer $N\, \geq\, 2$. From
the Covering Lemma of Y. Kawamata, \cite[Theorem 17]{K81} and \cite[Theorem 1.1.1]{KMM}, we
know that there is a connected smooth projective variety $Y$ and a (ramified)
Galois covering morphism 
\begin{align}\label{seq2}
\gamma \,:\, Y \,\longrightarrow\, X,
\end{align}
with Galois group $\Gamma \,=\, \Gal\bigl(K(Y)/K(X)\bigr)$, such that
$\widetilde{D}\,:=\, (\gamma^{\ast}D)_{\red}$ is a normal crossing divisor on $Y$, and
\begin{equation}\label{em}
\gamma^{\ast}D_i \,=\, k_iN(\gamma^{\ast}D_i)_{\red}
\end{equation}
for all $1\,\leq\, i\,\leq\, n$, where
$k_i$ are some positive integers.

Let $E_{\ast}$ be a parabolic bundle on $X$. Then using the Galois cover $\gamma \,:\, Y \,\longrightarrow \,X$ in \eqref{seq2} we can 
construct an orbifold bundle $V$ on $Y$ such that the parabolic bundle $E_{\ast}$ is recovered from it by taking $\Gamma$-invariants 
of the direct image of the twists of $V$ using the irreducible components of $\widetilde{D}$ (see \cite{B97II}, \cite{Bo1}, \cite{Bo2} for 
the explicit construction). Note that the same $\gamma$ is used for all parabolic bundles $E_*$ with the same parabolic divisor $D$.

For the convenience of the reader we will briefly recall the construction of an orbifold bundle on $Y$ from a parabolic bundle
on $X$ with parabolic structure on $D$. First let ${\mathcal L}_\ast$ be a parabolic line bundle on $X$ with parabolic structure on $D$.
So $\mathcal L$ is a line bundle on $X$, and for each $1\, \leq\, i\, \leq\, n$ we have
$0\, \leq\, \alpha^i\,=\, \frac{m^i_1}{N}\, <\, 1$ which is the
parabolic weight over the component $D_i$ of $D$; see Assumption \ref{assumption}(c). Then the orbifold line bundle
${\mathcal L}'$ on $Y$ corresponding to the parabolic line bundle ${\mathcal L}_\ast$ is 
$$
{\mathcal L}'\,=\, (\gamma^*{\mathcal L})\otimes \left(\bigotimes_{i=1}^n {\mathcal O}_Y(k_im^i_1(\gamma^{\ast}D_i)_{\red})\right)
$$
(see \eqref{em} for $k_i$); note that each $(\gamma^{\ast}D_i)_{\red}$ is preserved by the action of $\Gamma$ on $Y$ and hence the
above line bundle $$\bigotimes_{i=1}^n {\mathcal O}_Y(k_im^i_1(\gamma^{\ast}D_i)_{\red})$$ has a natural orbifold structure. The
orbifold structure on ${\mathcal L}'$ is given by the orbifold structures on 
$\gamma^*{\mathcal L}$ and $\bigotimes_{i=1}^n {\mathcal O}_Y(k_im^i_1(\gamma^{\ast}D_i)_{\red})$. The action of
$\Gamma$ on ${\mathcal L}'$ produces an action of $\Gamma$ on the direct image $\gamma_*{\mathcal L}'$. It is straightforward to
check that
$$
{\mathcal L}\,=\, (\gamma_*{\mathcal L}')^\Gamma \, \subset\,\gamma_*{\mathcal L}'.
$$
Note that this construction is local in the sense that we do not need $X$ or $Y$ to be a projective variety. Let $S\, \subset\, D$
denote the singular locus of $D$, and let
\begin{equation}\label{ei}
\iota\,\,:\,\, Y\setminus \gamma^{-1}(S)\, \,\hookrightarrow\,\, Y
\end{equation}
be the inclusion map. For a parabolic bundle $E_\ast$ on $X$, consider the restriction of it to the complement $X\setminus S$. It can be
locally expressed as direct sums of parabolic line bundles. Therefore, using the above construction of an orbifold line bundle from
a parabolic line bundle, and patching locally defined orbifold bundles, we
get an orbifold vector bundle $\mathcal{E}$ on $Y\setminus \gamma^{-1}(S)$. For the patching to work compatibly, note that if
$V_1$ and $V_2$ are two orbifold bundles on $\gamma^{-1}(U)$ for two decompositions of the parabolic bundle $E_\ast\big\vert_U$
into direct sum of parabolic line bundles, then the natural isomorphism of orbifold bundles
$$
\gamma^*\left(E\big\vert_{U\setminus (U\cap D)}\right)\,=\,
V_1\big\vert_{\gamma^{-1}(U)\cap (Y\setminus \gamma^{-1}(D))} \, \stackrel{\sim}{\longrightarrow}\,
V_2\big\vert_{\gamma^{-1}(U)\cap (Y\setminus \gamma^{-1}(D))}\,=\, \gamma^*\left(E\big\vert_{U\setminus (U\cap D)}\right)
$$
extends to an isomorphism $V_1\big\vert_{\gamma^{-1}(U)\cap (Y\setminus \gamma^{-1}(S))} \, \stackrel{\sim}{\longrightarrow}\,
V_2\big\vert_{\gamma^{-1}(U)\cap (Y\setminus \gamma^{-1}(S))}$. Since the codimension of
$\gamma^{-1}(S)\, \subset\, Y$ is two (if $\gamma^{-1}(S)$ is nonempty), it follows that the direct image $\iota_*{\mathcal E}$
is a coherent sheaf on $Y$, where $\iota$ is the map in \eqref{ei}. The conditions in Assumption \ref{assumption} ensure that
the coherent sheaf
$\iota_*{\mathcal E}$ is locally free. The action of $\Gamma$ on ${\mathcal E}$ produces an action of $\Gamma$ on $\iota_*{\mathcal E}$.
This orbifold bundle $\iota_*{\mathcal E}$ is the one associated to the parabolic bundle $E_\ast$.

Let $L$ be an ample line bundle on $X$. Consider $\widetilde{L}\,:=\, \gamma^{\ast}L$ which 
is an orbifold line bundle on $Y$. Since $\gamma$ is a finite morphism, and
$L$ is ample, the line bundle 
$\widetilde{L}$ is also ample. We will use $L$ (respectively, $\widetilde{L}$) to define the 
degree of a coherent sheaf on $X$ (respectively, $Y$).

The above mentioned correspondence between parabolic bundles and orbifold
bundles satisfies the following compatibility condition with semistability.

\begin{prop}[{\cite[Lemma 2.7, Lemma 3.16]{B97II}}]
The above orbifold bundle $V$ corresponding to the parabolic
bundle $E_{\ast}$ is orbifold semistable with respect to
$\widetilde{L}$ if and only if $E_{\ast}$ is parabolic semistable with respect to $L$.
Also, $V$ is semistable with respect to $\widetilde{L}$ in
the usual sense if and only if $E_{\ast}$ is parabolic semistable with respect to $L$.
\end{prop}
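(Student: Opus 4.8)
The plan is to route the argument through the standard dictionary of \cite{B97II} between parabolic bundles on $(X,D)$ with weights in $\tfrac1N\mathbb{Z}$ and orbifold ($\Gamma$-equivariant) bundles on the Kawamata cover $\gamma\colon Y\longrightarrow X$, exploiting that this dictionary matches not only objects but also (saturated) subsheaves, and that it relates parabolic degree to ordinary degree by a universal scalar. Here $N$ is the integer fixed in Assumption \ref{assumption}(c), which is also the integer used to build $\gamma$ in \eqref{em}, and $E'$ denotes the orbifold bundle attached to $E_{\ast}$. First I would recall from \cite{B97II} that $E_{\ast}\mapsto E'$ is the object part of an equivalence between parabolic bundles on $X$ with parabolic divisor $D$ and weights in $\tfrac1N\mathbb{Z}$ and orbifold bundles on $Y$; under it a parabolic subsheaf $M_{\ast}\subseteq E_{\ast}$, carrying the induced parabolic structure $M_t=(E_{t-[t]}\cap M)([t]D)$, corresponds to an orbifold subsheaf $M'\subseteq E'$ of the same rank, and saturations are preserved. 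This yields a rank-preserving bijection between saturated parabolic subsheaves of $E_{\ast}$ and saturated orbifold subsheaves of $E'$.

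The next step is the comparison of slopes: I claim $\deg_{\widetilde L}(M')=|\Gamma|\cdot{\rm par\_deg}_L(M_{\ast})$ whenever $M_{\ast}\leftrightarrow M'$, whence, writing $\mu_{\widetilde L}(\,\cdot\,)=\deg_{\widetilde L}(\,\cdot\,)/\rank(\,\cdot\,)$, we get $\mu_{\widetilde L}(M')=|\Gamma|\cdot{\rm par}\_\mu(M_{\ast})$ since the ranks agree. Both sides are additive in short exact sequences of parabolic bundles — the left side by additivity of $\deg$ together with exactness of $E_{\ast}\mapsto E'$, the right side by applying $\int_{-1}^{0}(\,\cdot\,)\,dt$ to the additivity of $\deg$ on the filtrations $\{E_t\}$ — so it is enough to verify it in rank one. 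For a parabolic line bundle along a component $D_i$ with weight $\alpha$, the definition of $\{E_t\}$ gives ${\rm par\_deg}=\deg+\alpha\,(D_i\cdot c_1(L)^{d-1})$, while the associated orbifold line bundle is $\gamma^{\ast}$ of the underlying line bundle twisted along $(\gamma^{\ast}D_i)_{\red}$ with coefficient dictated by $\alpha N$ and $k_i$; combining $\gamma^{\ast}D_i=k_iN(\gamma^{\ast}D_i)_{\red}$ from \eqref{em} with the projection formula $\int_Y\gamma^{\ast}\beta=|\Gamma|\int_X\beta$, its $\widetilde L$-degree equals $|\Gamma|\bigl(\deg+\alpha(D_i\cdot c_1(L)^{d-1})\bigr)$, as required; a general weight datum is a sum of such contributions over the components of $D$.

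With these in hand the two semistability equivalences are formal. For testing semistability (parabolic or orbifold) it suffices to use saturated subsheaves, since the saturation of an (orbifold) subsheaf is again an (orbifold) subsheaf of the same rank and of no smaller slope. Hence by the first two steps the conditions ``${\rm par}\_\mu(M_{\ast})\le{\rm par}\_\mu(E_{\ast})$ for all parabolic $M_{\ast}$'' and ``$\mu_{\widetilde L}(M')\le\mu_{\widetilde L}(E')$ for all orbifold $M'$'' are equivalent, all slopes rescaling by the common factor $|\Gamma|$; this gives: $E_{\ast}$ parabolic semistable $\iff$ $E'$ orbifold semistable. For the equivalence of orbifold semistability with ordinary semistability on $Y$, one direction is immediate because every orbifold subsheaf is a subsheaf. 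Conversely, if $E'$ were not semistable, its unique maximal destabilizing subsheaf $F$ would be $\Gamma$-invariant: each automorphism $\psi(g)$ of $Y$ satisfies $\gamma\circ\psi(g)=\gamma$, hence fixes $\widetilde L=\gamma^{\ast}L$, so the orbifold action of $g$ carries $F$ to a destabilizing subsheaf of the same rank and slope, forcing $g\cdot F=F$ by uniqueness; thus $F$ is an orbifold subsheaf violating orbifold semistability. Both assertions follow.

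I expect the slope comparison to be the main obstacle: one must describe $E'$ precisely enough — tracking the ramification indices $k_i$ and the normalization of the weights by $N$ — to land the clean proportionality constant $|\Gamma|$, and one must check that the parabolic structure induced on a subsheaf in the sense used above is exactly the one matching the orbifold subsheaf under the equivalence. The remaining steps rely only on the equivalence of \cite{B97II}, uniqueness of maximal destabilizing subsheaves on the smooth projective variety $Y$, and the fact that $\widetilde L$ is a genuine $\Gamma$-invariant ample line bundle.
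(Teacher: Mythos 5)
The paper does not actually prove this proposition --- it is quoted directly from \cite[Lemma 2.7, Lemma 3.16]{B97II} --- so there is no in-paper argument to compare against. Your proposal is a correct reconstruction of the standard proof from that source: the rank-preserving correspondence of (saturated) parabolic and orbifold subsheaves, the proportionality $\deg_{\widetilde L}(M')=|\Gamma|\cdot{\rm par\_deg}_L(M_{\ast})$ checked on line bundles via \eqref{em} and the projection formula, and the $\Gamma$-invariance of the unique maximal destabilizing subsheaf of $E'$ (since each $g\in\Gamma$ preserves $\widetilde L=\gamma^{\ast}L$) are exactly the ingredients used there.
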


\begin{prop}[{\cite[Lemma 4.6]{B97I}, \cite[Lemma 2.18]{B98}, \cite[Proposition 
3.2]{BS99}}]\label{prop-na}
A parabolic bundle $E_{\ast}$ is parabolic ample (respectively, nef) if and only if
the corresponding orbifold bundle $V$ is ample (respectively, nef) as a vector
bundle (in the usual sense).
\end{prop}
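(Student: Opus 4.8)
The plan is to use the correspondence between the parabolic bundle $E_{\ast}$ and the orbifold bundle $E'$ on the Galois cover $\gamma\colon Y\to X$ to convert the defining global generation conditions on $X$ into global generation conditions on $Y$, and then to quote the standard characterizations of ample and nef vector bundles (namely: $V$ is ample if and only if $S^m(V)\otimes F$ is globally generated for all $m\gg0$ for every coherent sheaf $F$; and $V$ is nef if and only if there is an ample line bundle $A$ with $S^m(V)\otimes A$ ample for all $m\ge1$). Two features of the dictionary will be used, both available from the explicit construction in \cite{B97II}: the orbifold bundle attached to $S^m(E_{\ast})$ is $S^m(E')$, and the underlying sheaf at weight $0$ is recovered as $S^m(E_{\ast})_0=\bigl(\gamma_{\ast}S^m(E')\bigr)^{\Gamma}$. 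Together with the projection formula and the fact that $\Gamma$-invariants commute with $H^0$, the second point gives, for locally free $F$ on $X$, an identification $H^0\bigl(X,\,F\otimes S^m(E_{\ast})_0\bigr)\,=\,H^0\bigl(Y,\,\gamma^{\ast}F\otimes S^m(E')\bigr)^{\Gamma}$ and natural comparison maps relating $\gamma^{\ast}S^m(E_{\ast})_0$ to $S^m(E')$.

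First I would prove that $E'$ ample implies $E_{\ast}$ parabolic ample. Fix a coherent sheaf $F$ on $X$. Since $E'$ is ample, $\gamma^{\ast}F\otimes S^m(E')$ is globally generated for all $m\gg0$; as $\Gamma$ is finite and we are in characteristic $0$, an averaging argument over $\Gamma$ then yields generation by $\Gamma$-invariant global sections (possibly only for $m$ in a fixed congruence class modulo $N$, which is harmless). Pushing down and using $S^m(E_{\ast})_0=\bigl(\gamma_{\ast}S^m(E')\bigr)^{\Gamma}$ shows $F\otimes S^m(E_{\ast})_0$ is globally generated on $X$; running over the finitely many congruence classes (or using the multiplication maps $S^a(E_{\ast})\otimes S^b(E_{\ast})\to S^{a+b}(E_{\ast})$) then gives the condition for all $m\gg0$, so $E_{\ast}$ is parabolic ample.

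Conversely, assume $E_{\ast}$ is parabolic ample and let $F'$ be a coherent sheaf on $Y$. Applying parabolic ampleness to the coherent sheaf $\gamma_{\ast}F'$ on $X$ shows $\gamma_{\ast}F'\otimes S^m(E_{\ast})_0$ is globally generated for all $m\gg0$. Pulling back along $\gamma$---a finite flat morphism between smooth varieties, so that $\gamma^{\ast}\gamma_{\ast}F'$ surjects onto $F'$---and composing with the comparison map $\gamma^{\ast}S^m(E_{\ast})_0\to S^m(E')$, one exhibits $S^m(E')\otimes F'$ as a quotient of a globally generated sheaf; hence it is globally generated for all $m\gg0$. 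Since $F'$ was arbitrary, the characterization above gives that $E'$ is ample. For the ``nef'' case one reduces to the ``ample'' case: parabolic tensoring with a line bundle $L$ on $X$ corresponds to orbifold tensoring with $\gamma^{\ast}L$, and $\gamma^{\ast}L$ is ample exactly when $L$ is (as $\gamma$ is finite and surjective); hence the existence of an ample $L$ on $X$ with $S^m(E_{\ast})\otimes L$ parabolic ample for all $m$ is equivalent, by the ``ample'' case, to the existence of an ample line bundle $A=\gamma^{\ast}L$ on $Y$ with $S^m(E')\otimes A$ ample for all $m$, which is precisely the assertion that $E'$ is nef.

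The step I expect to be the main obstacle is the bookkeeping in the parabolic--orbifold dictionary along the divisor $D$: making precise the identification $S^m(E_{\ast})_0=\bigl(\gamma_{\ast}S^m(E')\bigr)^{\Gamma}$ and the comparison maps with $S^m(E')$ from the construction of \cite{B97II} (this is where the rationality of the weights and the local decomposition in Assumption \ref{assumption} are used), together with the closely related point of passing between ordinary global generation on $Y$ and generation by $\Gamma$-invariant sections, for which finiteness of $\Gamma$ and characteristic $0$ are essential. A possibly cleaner alternative is to argue via the projectivization $\mathbb{P}(E_{\ast})$ of \cite{BL11}: parabolic ampleness of $E_{\ast}$ is the ampleness of the tautological line bundle on $\mathbb{P}(E_{\ast})$, the latter is covered by $\mathbb{P}(E')$ with the tautological bundle restricting to $\mathcal{O}_{\mathbb{P}(E')}(1)$, and ampleness (respectively, nefness) descends along the finite surjective morphism $\mathbb{P}(E')\to\mathbb{P}(E_{\ast})$.
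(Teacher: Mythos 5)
The paper does not prove this proposition: it is imported verbatim from \cite[Lemma 4.6]{B97I}, \cite[Lemma 2.18]{B98} and \cite[Proposition 3.2]{BS99}, so there is no in-paper argument to compare against. Judged on its own terms, your primary route (translating the global-generation definitions across the cover $\gamma\colon Y\to X$) has a genuine gap in the direction ``$E_{\ast}$ parabolic ample $\Rightarrow$ $E'$ ample.'' The comparison map $\gamma^{\ast}S^m(E_{\ast})_0\to S^m(E')$ obtained from $S^m(E_{\ast})_0\subset \gamma_{\ast}S^m(E')$ and the counit is injective but \emph{not} surjective along $\widetilde{D}=(\gamma^{\ast}D)_{\red}$: already for a parabolic line bundle with weight $k/N$ at $D$ one has $E'\cong \gamma^{\ast}L(k\widetilde{D})$ and $\gamma^{\ast}(E_0)=\gamma^{\ast}L\hookrightarrow \gamma^{\ast}L(k\widetilde{D})$, which drops rank nowhere but is not onto at points of $\widetilde{D}$ as a map of sheaves' fibers in the relevant sense; its cokernel is supported on $\widetilde{D}$. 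Consequently $S^m(E')\otimes F'$ is exhibited as a quotient of a globally generated sheaf only over $Y\setminus\widetilde{D}$, and global generation at points of $\widetilde{D}$ --- exactly where the parabolic structure lives --- is not established. The forward direction also leans on an ``averaging'' step that does not work as stated: generation of a $\Gamma$-equivariant sheaf by arbitrary global sections does not imply generation of the invariant direct image by invariant sections at a point $x$; one needs surjectivity of $H^0$ onto the direct sum of fibers over the whole orbit $\gamma^{-1}(x)$ (separation of the points of an orbit, uniformly in $x$), which requires an extra argument beyond global generation.

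Your closing alternative is the right move and is essentially the mechanism the paper itself records in Section \ref{proj}: parabolic ampleness (respectively, nefness) of $E_{\ast}$ is equivalent to ampleness (respectively, nefness) of $\mathcal{O}_{\mathbb{P}(E_{\ast})}(1)$, whose pullback under the finite surjective morphism $\mathbb{P}(E')\to\mathbb{P}(E')/\Gamma=\mathbb{P}(E_{\ast})$ is $\mathcal{O}_{\mathbb{P}(E')}(N(E_{\ast}))$; since ampleness and nefness of line bundles are both preserved and detected by finite surjective pullback, and $\mathcal{O}_{\mathbb{P}(E')}(N(E_{\ast}))$ is ample (respectively, nef) exactly when $E'$ is, the equivalence follows. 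One caveat if you pursue this: the input ``$E_{\ast}$ parabolic ample $\iff$ $\mathcal{O}_{\mathbb{P}(E_{\ast})}(1)$ ample'' is \cite[Proposition 3.3]{BL11}, whose own proof passes through the orbifold bundle, so you should check you are not quoting a statement that is downstream of the one you are proving.
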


Parabolic Chern classes, denoted by $c_i(E_{\ast})$, of a parabolic bundle $E_{\ast}$ have 
been introduced in \cite{B98} (see also \cite{IS}). Let $V$ be the associated orbifold bundle on $Y$ 
corresponding to a parabolic bundle $E_{\ast}$ (satisfying Assumption \ref{assumption}) for 
a suitable Galois covering map $\gamma \,:\, Y \,\longrightarrow\, X$. Then 
$$\gamma^{\ast}c_i(E_{\ast})\,=\, c_i(V)$$ for all $i\,\geq\, 0$ (see \cite[Section 3]{B98}).

\subsection{Parabolic bundles as ramified ${\rm GL}(n,{\mathbb C})$-bundles}\label{proj}

Let $X$ be a smooth complex projective variety, and let $D$ be a
normal crossing divisor on $X$. 
Let $$\phi \,:\, E_{{\rm GL}(n,\mathbb{C})} \,\longrightarrow\, X$$
be a \textit{ramified principal ${\rm GL}(n,\mathbb{C})$-bundle over $X$ with ramification
over $D$}. This means the total space $E_{{\rm GL}(n,\mathbb{C})}$ is a smooth complex
quasiprojective variety equipped with an algebraic right action of ${\rm GL}(n,\mathbb{C})$ 
$$f\,\,:\,\,E_{{\rm GL}(n,\mathbb{C})} \times {\rm GL}(n,\mathbb{C})
\,\,\longrightarrow\,\, E_{{\rm GL}(n,\mathbb{C})}$$ satisfying the following five conditions: 
 \begin{enumerate}
\item $\phi \circ f \,=\, \phi \circ p_1$, where $p_1$ is the natural projection of
$E_{{\rm GL}(n,\mathbb{C})} \times{\rm GL}(n,\mathbb{C})$ to $E_{{\rm GL}(n,\mathbb{C})}$,

\item for each point $x\,\in\, X$, the action of ${\rm GL}(n,\mathbb{C})$ on the reduced
fiber $\phi^{-1}(x)_{red}$ is transitive,

\item the restriction of $\phi$ to $\phi^{-1}(X-D)$ a principal ${\rm GL}(n,\mathbb{C})$-bundle
over $X-D$,

\item for each irreducible component $D_i \subset D$, the reduced inverse image 
$\phi^{-1}(D_i)_{red}$ is a smooth divisor and $$\widehat{D} \,:=\, \sum\limits_{i=1}^l 
\phi^{-1}(D_i)_{red}$$ is a normal crossing divisor on $E_{{\rm GL}(n,\mathbb{C})}$, and

\item for any point $x\,\in\, D$, and any point $z\,\in\, \phi^{-1}(x)$, the isotropy
subgroup $G_z \,\subset\, {\rm GL}(n,\mathbb{C})$, for the action of ${\rm GL}(n,\mathbb{C})$
on $E_{{\rm GL}(n,\mathbb{C})}$, is a finite group, and if $x$ is a smooth point of $D$, then
the natural action of $G_z$ on the quotient line
$T_zE_{{\rm GL}(n,\mathbb{C})}/T_z\phi^{-1}(D)_{\rm red}$ is faithful.
\end{enumerate}

Note that the last condition implies that the isotropy
subgroup $G_z \,\subset\, {\rm GL}(n,\mathbb{C})$ is actually a finite cyclic group.

There is a natural bijective correspondence between the complex vector bundles of rank $n$
on $X$ and the
principal ${\rm GL}(n,\mathbb{C})$-bundles on $X$. This bijection sends a
principal ${\rm GL}(n,\mathbb{C})$-bundle $F$ to the vector bundle
$F\times^{{\rm GL}(n,\mathbb{C})}{\mathbb C}^n$ associated to $F$ for the standard action of
${\rm GL}(n,\mathbb{C})$ on ${\mathbb C}^n$.
This correspondence extends to a
bijective correspondence between the ramified principal ${\rm GL}(n,\mathbb{C})$-bundles with 
ramification over $D$ and parabolic vector bundles of rank $n$ with $D$ as the parabolic 
divisor (see \cite[Theorem 1.1]{BBN03}).

We next recall the construction of projectivization of a parabolic bundle. 
Let $E_{\ast}$ be a parabolic vector bundle over $X$ of rank $n$. Let
$$\phi \,\,:\,\, E_{{\rm GL}(n,\mathbb{C})} \,\,\longrightarrow\,\, X$$
be the corresponding ramified principal ${\rm GL}(n,\mathbb{C})$-bundle with ramification
divisor $D$. Consider the standard action of ${\rm GL}(n,\mathbb{C})$ on $\mathbb{C}^n$; it
induces an action of ${\rm GL}(n,\mathbb{C})$ on the projective space $\mathbb{P}^{n-1}$.
The \textit{projectivization} of $E_{\ast}$, denoted by $\mathbb{P}(E_{\ast})$, is defined
to be the associated (ramified) fiber bundle
$$\mathbb{P}(E_{\ast}) \,:=\, E_{{\rm GL}(n,\mathbb{C})}\bigl(\mathbb{P}^{n-1}\bigr)
\,:= \, E_{{\rm GL}(n,\mathbb{C})}\times^{{\rm GL}(n,\mathbb{C})} \mathbb{P}^{n-1}
\,\longrightarrow\, X.$$
 
Take any point $x\,\in\, D$ and any $z\,\in\, \phi^{-1}(x)$. Let $G_z\,\subset\,
{\rm GL}(n,\mathbb{C})$ be the isotropy subgroup for $z$ for the action of
${\rm GL}(n,\mathbb{C})$ on $E_{{\rm GL}(n,\mathbb{C})}$. We recall that $G_z$ is a
finite group (by condition (5) above). Let $n_x$ be the order of $G_z$. Note that the order
of the group $G_z$ is independent of the choice of $z\,\in\, \phi^{-1}(x)$ because
${\rm GL}(n,\mathbb{C})$ acts transitively on $\phi^{-1}(x)$. The number of distinct integers
$n_x$ as $x$ varies over $D$ is finite (see \cite{BL11}). Let 
\begin{align}\label{seq3}
N(E_{\ast}) \,\,=\,\, \text{l.c.m.} \bigl\{n_x \,\,\big\vert\,\, x\,\in\, D\bigr\}
\end{align}
be the least common multiple of all these finitely many positive integers $n_x$. 

For any point $y\,\in\, \mathbb{P}^{n-1}$, let $H_y\,\subset\,{\rm GL}(n,\mathbb{C})$ be the
isotropy subgroup for the natural action of ${\rm GL}(n,\mathbb{C})$ on $\mathbb{P}^{n-1}$;
so $H_y$ is a maximal parabolic subgroup of ${\rm GL}(n,\mathbb{C})$. The group $H_y$
then acts on the fiber of $\mathcal{O}_{\mathbb{P}^{n-1}}(1) \,\longrightarrow\,
\mathbb{P}^{n-1}$ over the point $y$. From the definition of $N(E_{\ast})$ in
\eqref{seq3} it follows immediately that for any $z\,\in\, \phi^{-1}(D)$ and any $y\,\in\,
\mathbb{P}^{n-1}$, the subgroup $G_z \bigcap H_y \,\subset\, {\rm GL}
(n,\mathbb{C})$ acts trivially on the fiber of the line bundle
$\mathcal{O}_{\mathbb{P}^{n-1}}\bigl(N(E_{\ast})\bigr)\,:=\,
\mathcal{O}_{\mathbb{P}^{n-1}} (1)^{\otimes N(E_{\ast})}$ over the point $y$.

Consider the action of ${\rm GL}(n,\mathbb{C})$ on the total space of
$\mathcal{O}_{\mathbb{P}^{n-1}}\bigl(N(E_{\ast})\bigr)$ constructed using the
standard action of ${\rm GL}(n,\mathbb{C})$ on $\mathbb{C}^n$.
Let
$$E_{{\rm GL}(n,\mathbb{C})}\bigl(\mathcal{O}_{\mathbb{P}^{n-1}}
\bigl(N(E_{\ast})\bigr)\bigr) \,:=\,E_{{\rm GL}(n,\mathbb{C})}\times^{{\rm GL}(n,\mathbb{C})}
\mathcal{O}_{\mathbb{P}^{n-1}}\bigl(N(E_{\ast})\bigr)\, \longrightarrow\, X$$
be the associated fiber bundle.
As the natural projection $\mathcal{O}_{\mathbb{P}^{n-1}}\bigl(N(E_{\ast})\bigr)
\,\longrightarrow \,\mathbb{P}^{n-1}$ intertwines the actions of
${\rm GL}(n,\mathbb{C})$ on $\mathcal{O}_{\mathbb{P}^{n-1}}\bigl(N(E_{\ast})\bigr)$
and $\mathbb{P}^{n-1}$, it produces a projection
\begin{equation}\label{evb}
E_{{\rm GL}(n,\mathbb{C})}\bigl(\mathcal{O}_{\mathbb{P}^{n-1}}\bigl(N(E_{\ast})\bigr)\bigr)
\,\longrightarrow\, E_{{\rm GL}(n,\mathbb{C})}\bigl(\mathbb{P}^{n-1}\bigr)
\,=\,\mathbb{P}(E_{\ast}).
\end{equation}
Using the above observation that $G_z\bigcap H_y$ acts trivially on the fibers of
$\mathcal{O}_{\mathbb{P}^{n-1}}\bigl(N(E_{\ast})\bigr)$ over $y$
it follows easily that the projection in \eqref{evb} makes
$E_{{\rm GL}(n,\mathbb{C})}\bigl(\mathcal{O}_{\mathbb{P}^{n-1}}\bigl(N(E_{\ast})\bigr)\bigr)$
an algebraic line bundle over the projectivization $\mathbb{P}(E_{\ast})$. We denote
this line bundle in \eqref{evb} by $\mathcal{O}_{\mathbb{P}(E_{\ast})}(1)$. 

Let $E' \,\longrightarrow\, Y$ be the orbifold bundle over $Y$
corresponding to the parabolic vector bundle $E_*$, where $\gamma\,:\,Y \,
\longrightarrow\, X$ is a covering as in \eqref{seq2} with Galois group $\Gamma\,=
\,\Gal(\gamma)$. Then the action of $\Gamma$ on $E'$ produces a left action of $\Gamma$
on $\mathbb{P}(E')$. Clearly, we have
$\mathbb{P}(E')/\Gamma \,=\, \mathbb{P}(E_{\ast})$. The isotropy subgroups, for the action
of $\Gamma$ on $\mathbb{P}(E')$, act trivially on the corresponding fibers of
$\mathcal{O}_{\mathbb{P}(E')}\bigl(N(E_{\ast})\bigr)$; this
follows from condition (5) above in the definition of a ramified 
principal ${\rm GL}(n,\mathbb{C})$-bundle and 
\eqref{seq3}. Hence the quotient space
$\mathcal{O}_{\mathbb{P}(E')}\bigl(N(E_{\ast})\bigr)/\Gamma$ is actually a line bundle over
$\mathbb{P}(E')/\Gamma\,=\, \mathbb{P}(E_{\ast})$. We have a natural isomorphism of line
bundles $$\mathcal{O}_{\mathbb{P}(E')}\bigl(N(E_{\ast})\bigr)/\Gamma
\,=\, \mathcal{O}_{\mathbb{P}(E_{\ast})}(1).$$

Note that the pullback of $\mathcal{O}_{\mathbb{P}(E_{\ast})}(1)$ to $\mathbb{P}(E')$ under 
the finite quotient map $$\mathbb{P}(E') \,\longrightarrow \,\mathbb{P}(E')/\Gamma\, =\,
\mathbb{P}(E_{\ast})$$ is $\mathcal{O}_{\mathbb{P}(E')}\bigl(N(E_{\ast})\bigr)$.
The parabolic bundle $E_{\ast}$ is parabolic
ample (respectively, parabolic nef) if and only if the line bundle $\mathcal{O}_{\mathbb{P}(E_{\ast})}(1)$
on $\mathbb{P}(E_{\ast})$ is ample (respectively, nef) \cite[Proposition 3.3]{BL11}.
Also, the following three statements are equivalent:
\begin{itemize}
\item the parabolic vector bundle $E_{\ast}$ is nef,

\item $E'$ is nef, and

\item $\mathcal{O}_{\mathbb{P}(E')}\bigl(N(E_*)\bigr)$ is nef
\end{itemize}
(see \cite[Proposition 3.2]{BS99}).

\section{Definition of Seshadri constant of parabolic bundles}\label{defn-sc}

Let $E_{\ast}$ be a parabolic nef vector bundle of rank $n$ on a smooth projective variety $X$. We fix a
point $x\,\in \,X$, and let $$\psi_x \,\,:\,\, \Bl_x(X) \,\longrightarrow\, X$$
be the blow up of $X$ at $x$ with exceptional divisor $A_x \,=\, \psi^{-1}_x(x)$. Consider
the following fiber product diagram:
\begin{equation}\label{esc}
\begin{tikzcd} 
\Bl_{\rho^{-1}(x)} \bigl(\mathbb{P}(E_{\ast})\bigr) \,=\, \mathbb{P}(E_{\ast})\times_X \Bl_x(X)\arrow[r, "\widetilde{\psi_x}"] \arrow[d, "\widetilde{\rho}"]
 & \mathbb{P}(E_{\ast}) \arrow[d,"\rho"]\\
 \Bl_x(X) \arrow[r, "\psi_x" ]
 & X
\end{tikzcd}
\end{equation}
The above morphism $\rho$ is the projectivization of the parabolic bundle $E_{\ast}$ as
constructed in Section \ref{proj}. Hence the map $\rho$ is flat and the fiber of $\rho$ is a $\mathbb{P}^{n-1}$ (see \cite{BL11}).

Let $\xi$ be the numerical equivalence class of the line bundle 
$\mathcal{O}_{\mathbb{P}(E_{\ast})}(1)\, \longrightarrow\, \mathbb{P}(E_{\ast})$.
 
\begin{defi}\label{main-defn}
Let $E_{\ast}$ be a parabolic nef vector bundle on a smooth projective variety $X$.
The \textit{parabolic 
Seshadri constant} of $E_{\ast}$ at a point $x\,\in\, X$, denoted by
$\varepsilon_{\ast}(E_{\ast},\,x)$, is defined to be 
\begin{align*}
\varepsilon_{\ast}(E_{\ast},\,x)\,\,:=\,\,\sup\,\Bigl\{\lambda\,\in\,\mathbb{R}_{>0}\,\, \big\vert
\,\,\widetilde{\psi_x}^{\ast}(\xi)- \lambda\widetilde{\rho}^{\ast}A_x\ \
{\rm is\ nef} \Bigr\}
\end{align*}
(see \eqref{esc}).
\end{defi}

\begin{xrem}
As mentioned before, there is a canonical bijection between the parabolic vector bundles
of rank $n$ with parabolic divisor $D$ and the ramified principal
${\rm GL}(n,\mathbb{C})$-bundles ramified over $D$. In particular, the constructions of
the projectivization $\mathbb{P}(E_{\ast})$ and the
tautological bundle $\mathcal{O}_{\mathbb{P}(E_{\ast})}(1)$
for a parabolic bundle $E_{\ast}$ are uniquely determined by
$E_{\ast}$. So the parabolic Seshadri constant $\varepsilon_{\ast}(E_{\ast},\,x)$
is well-defined. 
\end{xrem} 
 
The next theorem shows that the parabolic Seshadri constant 
$\varepsilon_{\ast}(E_{\ast},\,x)$ can be computed by examining the intersection of $\xi$ 
with certain curves on $\mathbb{P}(E_{\ast})$.

\begin{thm}\label{corl2.1}
Let $E_{\ast}$ be a parabolic nef vector bundle on a smooth complex projective variety 
$X$, and let $x\,\in\, X$ be a point of $X$. Let $\mathcal{C}_{\rho,x}$ be the set of all integral 
curves $C\,\subset\, \mathbb{P}(E_{\ast})$ that intersect the fiber $\rho^{-1}(x)$
while not being contained in $\rho^{-1}(x)$. Then $$\varepsilon_{\ast}(E_{\ast},\,x) 
\,= \,\inf\limits_{C\in \mathcal{C}_{\rho,x}} \left\{ \frac{\xi\cdot 
C}{\mult_x\rho_{\ast}C}\right\}.$$
 \end{thm}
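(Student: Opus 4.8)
The plan is to reduce the statement to the corresponding fact for ordinary nef vector bundles, which is one of the results of Fulger--Murayama recalled as motivation, applied on the covering $Y$ via the orbifold bundle $E'$, and then descend. Concretely, let $\gamma\,:\,Y\,\longrightarrow\,X$ be a Kawamata cover as in \eqref{seq2}, let $E'$ be the orbifold bundle on $Y$ associated to $E_{\ast}$, and recall from Section \ref{proj} that $\mathbb{P}(E')/\Gamma\,=\,\mathbb{P}(E_{\ast})$ with the quotient map $q\,:\,\mathbb{P}(E')\,\longrightarrow\,\mathbb{P}(E_{\ast})$ satisfying $q^{\ast}\mathcal{O}_{\mathbb{P}(E_{\ast})}(1)\,=\,\mathcal{O}_{\mathbb{P}(E')}\bigl(N(E_{\ast})\bigr)$. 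Since $E_{\ast}$ is parabolic nef, $E'$ is nef and hence $\mathcal{O}_{\mathbb{P}(E')}\bigl(N(E_{\ast})\bigr)$ is nef; in particular $\xi$ is nef, so all intersection numbers $\xi\cdot C$ that appear are $\ge 0$ and both sides of the claimed equality make sense.

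First I would unwind the definition of $\varepsilon_{\ast}(E_{\ast},\,x)$ using the fiber square \eqref{esc}. The class $\widetilde{\psi_x}^{\ast}\xi-\lambda\,\widetilde{\rho}^{\ast}E_x$ on $\Bl_{\rho^{-1}(x)}\mathbb{P}(E_{\ast})$ is nef if and only if it has non-negative intersection with every integral curve $\widetilde{C}$ there. Curves contained in the exceptional locus of $\widetilde{\psi_x}$ (which is $\rho^{-1}(x)\times E_x$, a $\mathbb{P}^{d-1}$-bundle over $\rho^{-1}(x)$) are handled separately: on such curves $\widetilde{\psi_x}^{\ast}\xi$ restricts trivially along the $E_x$-direction while $\widetilde{\rho}^{\ast}E_x$ restricts to the hyperplane class, and one checks the intersection condition holds automatically for all $\lambda\ge 0$ using nefness of $\xi|_{\rho^{-1}(x)}$ together with the projection formula — this is exactly the parabolic analogue of the standard computation for blow-ups of projective bundles. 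For a curve $\widetilde{C}$ not contained in that exceptional locus, let $C\,=\,\widetilde{\psi_x}_{\ast}\widetilde{C}\,\subset\,\mathbb{P}(E_{\ast})$; then $C$ meets $\rho^{-1}(x)$ but is not contained in it, so $C\in\mathcal{C}_{\rho,x}$, and the standard blow-up identity gives $\widetilde{\psi_x}^{\ast}\xi\cdot\widetilde{C}\,=\,\xi\cdot C$ while $\widetilde{\rho}^{\ast}E_x\cdot\widetilde{C}\,=\,E_x\cdot\bigl(\psi_x^{-1}\bigr)_{\ast}\rho_{\ast}C\,=\,\mult_x\rho_{\ast}C$ (using that $\widetilde{\rho}_{\ast}\widetilde{C}$ equals the strict transform of $\rho_{\ast}C$ plus an effective exceptional cycle, and $E_x\cdot(\text{exceptional curve})\le 0$, which only helps the inequality; here one wants $\mult_x\rho_\ast C$ to come out exactly, so I would push the curve down and use $\pi^\ast$ on $X$ directly rather than intermediate strict transforms). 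Combining, $\widetilde{\psi_x}^{\ast}\xi-\lambda\widetilde{\rho}^{\ast}E_x$ is nef iff $\lambda\le \xi\cdot C/\mult_x\rho_{\ast}C$ for every $C\in\mathcal{C}_{\rho,x}$ (the exceptional-locus curves imposing no constraint), which is precisely the claimed formula; the $\sup$ of valid $\lambda$ equals the $\inf$ of the ratios.

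The step I expect to be the main obstacle is making the correspondence between curves $\widetilde{C}$ on the blow-up $\Bl_{\rho^{-1}(x)}\mathbb{P}(E_{\ast})$ and curves $C$ on $\mathbb{P}(E_{\ast})$ sufficiently clean to read off both $\xi\cdot C$ and $\mult_x\rho_{\ast}C$ exactly, because $\mathbb{P}(E_{\ast})$ is only a ramified fiber bundle (the total space is singular over $D$ in general) and $\rho$ is not flat, so the usual projective-bundle computations need to be justified either by restricting to the smooth locus — the curves in $\mathcal{C}_{\rho,x}$ generically avoid the singular locus once $x\notin D$, and the case $x\in D$ can be treated by passing to $Y$ — or by invoking the orbifold picture: on $Y$, $\mathbb{P}(E')$ is smooth, $\rho'\,:\,\mathbb{P}(E')\,\longrightarrow\,Y$ is a genuine $\mathbb{P}^{n-1}$-bundle, and the Fulger--Murayama description of the Seshadri constant of the nef bundle $E'$ at a point $y\in\gamma^{-1}(x)$ applies verbatim; one then pulls back the blow-up diagram \eqref{esc} along $q$ and $\gamma$, uses $q^{\ast}\xi\,=\,\tfrac{1}{N(E_{\ast})}\mathcal{O}_{\mathbb{P}(E')}\bigl(N(E_{\ast})\bigr)$... actually $q^\ast\xi = \mathcal O_{\mathbb P(E')}(N(E_\ast))$ as classes and the $N(E_\ast)$ factors cancel between numerator and the ramification index in $\mult$, and matches the two infima. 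I would present the direct argument on $X$ as the main line and remark that the degenerate case $x\in D$ reduces to the orbifold computation; checking the multiplicity bookkeeping (that the ramification factors $N(E_{\ast})$ and the orders $n_x$ cancel correctly on both sides) is the one genuinely delicate point and deserves an explicit lemma or a careful paragraph rather than being waved through.
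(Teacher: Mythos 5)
Your proposal is correct and follows essentially the same route as the paper: classify the irreducible curves on $\Bl_{\rho^{-1}(x)}\bigl(\mathbb{P}(E_{\ast})\bigr)$ into strict transforms of curves in $\mathcal{C}_{\rho,x}$, curves in the exceptional locus, and curves disjoint from it; use the projection formula to identify $\widetilde{\rho}^{\ast}E_x\cdot\widetilde{C}$ with $\mult_x\rho_{\ast}C$ for the first class; and use nefness of $\widetilde{\psi_x}^{\ast}(\xi)$ to see that the other two classes impose no constraint on $\lambda$. The additional care you propose about the singularities of $\mathbb{P}(E_{\ast})$ over $D$ and the descent from $\mathbb{P}(E')$ goes beyond what the paper records (it argues directly on $\mathbb{P}(E_{\ast})$), but it does not change the argument.
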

 
\begin{proof}
First note that $C \,\in \,\mathcal{C}_{\rho,x}$ if and only if $\mult_x\rho_{\ast}C 
\,>\, 0$. Let $\widetilde{C}$ be the strict transform of a curve $C \,\in\,
\mathcal{C}_{\rho,x}$ under the map $\widetilde{\psi_x}$ in \eqref{esc}.
We have $$ \widetilde{\rho}^{\ast}A_x\cdot \widetilde{C} \,=\, A_x\cdot 
\widetilde{\rho}_{\ast}\widetilde{C} \,=\, \mult_x\rho_{\ast}C.$$
Hence
$$\bigl\{\widetilde{\psi_x}^{\ast}(\xi)- \lambda\widetilde{\rho}^{\ast}A_x\bigr\}
\cdot \widetilde{C} \,\geq\, 0$$
if and only if $\xi\cdot C\,\geq\, \lambda \mult_x\rho_{\ast}C$.

Let $C'$ be an irreducible curve in $\Bl_{\rho^{-1}(x)}\bigl(\mathbb{P}(E_{\ast})\bigr)$
which is contained in the exceptional locus $\widetilde{\rho}^{-1}(A_x)$ of $\widetilde{\psi_x}$. Then 
$$\widetilde{\rho}^{\ast}A_x\cdot C'\,=\, A_x \cdot \widetilde{\rho}_{\ast}C' \,<\, 0.$$

Since $E_{\ast}$ is given to be parabolic nef, the line bundle $\xi$ is nef.
Therefore, we conclude that
$$\bigl\{\widetilde{\psi_x}^{\ast}(\xi)- \lambda\widetilde{\rho}^{\ast}A_x\bigr\} \cdot C'
\,=\, \bigl\{ \xi \cdot \widetilde{{\psi}_x}_{\ast}\widetilde{C'} - \lambda A_x \cdot
\widetilde{\rho}_{\ast}C'\bigr\}\geq 0$$ for all $\lambda \,\geq\, 0$.
Every irreducible curve on $\Bl_{\rho^{-1}(x)}\bigl(\mathbb{P}(E_{\ast})\bigr)$
satisfies one of the following conditions:
\begin{itemize}
\item It is the strict transform of a curve $C \,\in\, \mathcal{C}_{\rho,x}$.
 
\item It is contained in the exceptional locus $\widetilde{\rho}^{-1}(A_x)$ of
$\widetilde{\psi_x}$.

\item It does not intersect $\widetilde{\rho}^{-1}(A_x)$. 
\end{itemize}

Now using the nefness of $\widetilde{\psi_x}^{\ast}(\xi)$, we obtain the result.
\end{proof}
 
\begin{xrem}
Let $\rho\,:\, \mathbb{P}(E_{\ast})\,\longrightarrow\, X$ be the
natural projection map, and let $\xi 
\,=\, \mathcal{O}_{\mathbb{P}(E_{\ast})}(1)$, which is constructed in Section \ref{proj}. For 
any point $x\,\in\, X$, the fiber $\rho^{-1}(x)$ is isomorphic to the projective space 
$\mathbb{P}^{n-1}$, and $\xi\big\vert_{\rho^{-1}(x)} \,=\, 
\mathcal{O}_{\mathbb{P}^{n-1}}(N(E_{\ast}))$.

Let $y\,\in\, \mathbb{P}(E_{\ast})$. We can relate the classical Seshadri constant 
$\varepsilon(\xi,\,y)$ of $\xi$ and the parabolic Seshadri constant
$\varepsilon_{\ast}(E_{\ast},\,\rho(y))$ of $E_{\ast}$ at $\rho(y)$ as follows:
$$\varepsilon(\xi,\,y) \,=\,\ \, \inf\limits_{y\in C \subset \mathbb{P}(E_{\ast})} \left\{\frac{\xi\cdot C}{\mult_yC}\right\}
$$
$$
= \,\min\left\{\inf\limits_{C\not\subset \rho^{-1}(\rho(y)) \subseteq \mathbb{P}(E_{\ast})}
\Bigl\{ \frac{\xi\cdot C}{\mult_yC}\Bigr\},\ \inf\limits_{C\subset \rho^{-1}(\rho(y)) \subseteq \mathbb{P}(E_{\ast})} \Bigl\{\frac{\xi\cdot C}{\mult_yC}\Bigr\}\right\}.$$
Hence we have
$$
\varepsilon(\xi,\,y)\,\geq\, \min\left\{\inf\limits_{C\not\subset \rho^{-1}(\rho(y))
\subseteq\mathbb{P}(E_{\ast})} \Bigl\{ \frac{\xi\cdot C}{\mult_yC}\Bigr\},\ N(E_{\ast})\right\}
\,\geq\, \min\Bigl\{\varepsilon_{\ast}(E_{\ast},\,\rho(y)),\ N(E_{\ast})\Bigr\}.$$
\end{xrem}

Now we give another alternative characterization of parabolic Seshadri constants. This
characterization uses 
the Galois covering $\gamma\,:\, Y \,\longrightarrow\, X$ and the orbifold bundle $E'$ over $Y$ associated to 
$E_{\ast}$; see Section \ref{orbifold}.

\begin{thm}\label{thm2.2}
Let $E_{\ast}$ be a parabolic nef vector bundle on a smooth irreducible complex
projective variety $X$, and let $E'\,\longrightarrow\, Y$ be the corresponding orbifold
bundle over $Y$, where $\gamma\,:\, Y\,\longrightarrow\, X$ is a covering as in \eqref{seq2}
with Galois group $\Gamma \,=\, \Gal(\gamma)$. Let
$$\phi_x \,:\, \Bl_{\gamma^{-1}(x)}Y \,\longrightarrow\, Y$$ be the blow up of $Y$
along $\gamma^{-1}(x)$. Consider the following fiber product diagram: 
 \begin{center}
\begin{tikzcd} 
\mathbb{P}(\phi_x^{\ast}E')\, =\, \mathbb{P}(E')\times_Y \Bl_{\gamma^{-1}(x)}Y\arrow[r, "\widetilde{\phi_x}"] \arrow[d, "\widetilde{\tau}"]
 & \mathbb{P}(E') \arrow[d,"\tau"]\\
 \Bl_{\gamma^{-1}(x)}Y \arrow[r, "\phi_x" ]
 & Y
\end{tikzcd}
\end{center}
Denote the numerical equivalence class of the line bundle $\mathcal{O}_{\mathbb{P}(E')}(1)$ by $\xi'$. Let $A_{\gamma^{-1}(x)}$ be the exceptional divisor of the map $\phi_x$. Then
\begin{eqnarray*}
\varepsilon_{\ast}(E_{\ast},\,x)\, &=& \, N(E_{\ast})\cdot \sup\Bigl\{ \lambda
\,\in\, \mathbb{R}_{>0}\,\,\big\vert\,\, \widetilde{\phi_x}^{\ast}(\xi')-
\lambda\widetilde{\tau}^{*}(A_{\gamma^{-1}(x)})\, \rm{~is ~nef} \Bigr\} \\
&=&\, \it N(E_{\ast})\cdot \inf\limits_{C\in \mathcal{C}_{\tau,\gamma^{-1}(x)}} \left\{ \frac{\xi'\cdot C}{\sum\limits_{y\in \gamma^{-1}(x)}\mult_y\tau_{\ast}C}\right\},
\end{eqnarray*}
where $\mathcal{C}_{\tau,\gamma^{-1}(x)}$ is the set of all irreducible curves $C$ in
$\mathbb{P}(E')$ such that $$C \,\nsubseteq \,\tau^{-1}(\gamma^{-1}(x))$$ and
$\sum\limits_{y\in \gamma^{-1}(x)}\mult_y\tau_{\ast}C \,>\, 0$, while
$N(E_{\ast})$ is defined as in \eqref{seq3}.
\end{thm}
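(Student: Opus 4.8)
The plan is to leverage the two-step relation between $\mathbb{P}(E_{\ast})$ and $\mathbb{P}(E')$: we have the finite quotient map $q\colon \mathbb{P}(E')\to \mathbb{P}(E')/\Gamma = \mathbb{P}(E_{\ast})$ satisfying $q^{\ast}\mathcal{O}_{\mathbb{P}(E_{\ast})}(1) = \mathcal{O}_{\mathbb{P}(E')}(N(E_{\ast}))$, so $q^{\ast}\xi = N(E_{\ast})\,\xi'$. On the base, $\gamma\colon Y\to X$ is a finite map, and the blow-ups fit together: $\mathbb{P}(\phi_x^{\ast}E')$ maps to $\mathbb{P}(E_{\ast})\times_X \mathrm{Bl}_x(X) = \mathrm{Bl}_{\rho^{-1}(x)}\mathbb{P}(E_{\ast})$ by a finite map $\widetilde{q}$ covering $q$ and $\gamma$. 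First I would set up this commutative cube of blow-ups and fiber products carefully and record the pullback identities: $\widetilde{q}^{\ast}\widetilde{\psi_x}^{\ast}\xi = N(E_{\ast})\,\widetilde{\phi_x}^{\ast}\xi'$, and $\widetilde{q}^{\ast}\widetilde{\rho}^{\ast}E_x = \widetilde{\tau}^{\ast}\phi_x^{\ast}(\psi_x^{\ast}(\mathrm{pt})) $ computed in terms of $E_{\gamma^{-1}(x)}$ — here I must be slightly careful because $\gamma$ is ramified, but the key point is only that the finite morphism $\widetilde{q}$ sends curves in $\mathcal{C}_{\tau,\gamma^{-1}(x)}$ to curves in $\mathcal{C}_{\rho,x}$ and conversely every curve in $\mathcal{C}_{\rho,x}$ is dominated by such a curve.

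For the first equality (the sup/nef characterization), I would use the standard fact that for a finite surjective morphism $f$ and a class $\alpha$, $f^{\ast}\alpha$ is nef if and only if $\alpha$ is nef. Applying this to $\widetilde{q}$ and the class $\widetilde{\psi_x}^{\ast}\xi - \lambda\,\widetilde{\rho}^{\ast}E_x$, its pullback is $N(E_{\ast})\widetilde{\phi_x}^{\ast}\xi' - \lambda\,\widetilde{q}^{\ast}\widetilde{\rho}^{\ast}E_x$; rescaling by $1/N(E_{\ast})$ and matching the exceptional classes, the threshold $\lambda$ on the $X$-side equals $N(E_{\ast})$ times the threshold $\mu$ for which $\widetilde{\phi_x}^{\ast}\xi' - \mu\,\widetilde{\tau}^{\ast}E_{\gamma^{-1}(x)}$ is nef. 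The identification of $\widetilde{q}^{\ast}\widetilde{\rho}^{\ast}E_x$ with (a positive multiple of) $\widetilde{\tau}^{\ast}E_{\gamma^{-1}(x)}$ is the computational heart: I would argue via intersection numbers against test curves, or via the fact that $\phi_x^{\ast}\gamma^{\ast}(\mathrm{pt})$ is supported on $E_{\gamma^{-1}(x)}$ and the multiplicities are absorbed since we take the nef threshold (equivalently, because the definition of $\varepsilon_{\ast}$ is insensitive to replacing $E_x$ by any positive rational multiple after rescaling $\lambda$). This needs Definition \ref{main-defn} and the description in Theorem \ref{corl2.1} applied on both $X$ and $Y$.

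For the second equality (the infimum over curves formula), I would run the curve-counting argument of Theorem \ref{corl2.1} directly on $Y$ for the orbifold bundle $E'$: every irreducible curve in $\mathrm{Bl}_{\rho^{-1}(x)}\mathbb{P}(E')$ — wait, rather in $\mathbb{P}(\phi_x^{\ast}E')$ — is either the strict transform of a curve in $\mathcal{C}_{\tau,\gamma^{-1}(x)}$, or lies in the exceptional locus of $\widetilde{\phi_x}$, or misses it; using nefness of $\xi'$ (which holds since $E_{\ast}$ parabolic nef $\iff$ $E'$ nef, by Proposition \ref{prop-na}) the exceptional and disjoint curves impose no constraint, so the nef threshold for $\widetilde{\phi_x}^{\ast}\xi' - \mu\,\widetilde{\tau}^{\ast}E_{\gamma^{-1}(x)}$ equals $\inf_{C\in\mathcal{C}_{\tau,\gamma^{-1}(x)}} \xi'\cdot C\big/\sum_{y\in\gamma^{-1}(x)}\mathrm{mult}_y\tau_{\ast}C$. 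The key identity here is $\widetilde{\tau}^{\ast}E_{\gamma^{-1}(x)}\cdot\widetilde{C} = E_{\gamma^{-1}(x)}\cdot\widetilde{\tau}_{\ast}\widetilde{C} = \sum_{y\in\gamma^{-1}(x)}\mathrm{mult}_y\tau_{\ast}C$, since $\gamma^{-1}(x)$ is a finite set of points and the exceptional divisor of $\phi_x$ breaks into one component over each such point. Multiplying by $N(E_{\ast})$ then gives the claimed formula.

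The main obstacle I anticipate is the bookkeeping around the ramification of $\gamma$: pinning down exactly which multiple of $\widetilde{\tau}^{\ast}E_{\gamma^{-1}(x)}$ equals $\widetilde{q}^{\ast}\widetilde{\rho}^{\ast}E_x$, and verifying that the curve correspondence under $\widetilde{q}$ behaves well with respect to the multiplicity functions $\mathrm{mult}_x\rho_{\ast}(-)$ on $X$ versus $\sum_{y}\mathrm{mult}_y\tau_{\ast}(-)$ on $Y$. The cleanest route is probably to avoid comparing the two curve formulas directly and instead prove the second equality self-contained on $Y$ (a verbatim repeat of the proof of Theorem \ref{corl2.1} with $X,E_{\ast},\rho$ replaced by $Y,E',\tau$ and a single point replaced by the finite set $\gamma^{-1}(x)$), and prove the first equality purely via the finite-pullback-preserves-nefness principle applied to $\widetilde{q}$, so that the $N(E_{\ast})$ factor and the exceptional-class matching are the only things requiring the explicit geometry. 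Everything else is then formal.
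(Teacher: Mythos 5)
Your proposal is correct and follows essentially the same route as the paper: the first equality is obtained by pulling back $\widetilde{\psi_x}^{\ast}(\xi)-\lambda\widetilde{\rho}^{\ast}E_x$ along the finite morphism $\Delta$ (your $\widetilde{q}$) from $\mathbb{P}(E')\times_Y \Bl_{\gamma^{-1}(x)}Y$ to $\mathbb{P}(E_{\ast})\times_X\Bl_xX$, using $\gamma'^{\ast}(\xi)=N(E_{\ast})\xi'$ and the fact that nefness is detected by finite surjective pullback, while the second equality is exactly the curve trichotomy of Theorem \ref{corl2.1} rerun on $Y$ with the single point replaced by $\gamma^{-1}(x)$. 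The one point you flag as delicate --- matching $\widetilde{q}^{\ast}\widetilde{\rho}^{\ast}E_x$ with $\widetilde{\tau}^{\ast}E_{\gamma^{-1}(x)}$ despite the ramification of $\gamma$ --- is handled in the paper simply by asserting $\widetilde{\gamma}^{\ast}(E_x)=E_{\gamma^{-1}(x)}$, so your treatment is no less complete than the published one.
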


\begin{proof}
Note that $\mathbb{P}(E')\times_Y \Bl_{\gamma^{-1}(x)}Y = \Bl_{\tau^{-1}(\gamma^{-1}(x))}\mathbb{P}(E')$. Consider the following commutative diagram:
\begin{center}
\begin{tikzcd} 
\Bl_{\gamma^{-1}(x)} Y\arrow[r, "\widetilde{\gamma}"] \arrow[d, "\phi_x"]
 & \Bl_xX \arrow[d,"\psi_x"]\\
 Y \arrow[r, "\gamma" ]
 & X
\end{tikzcd}
\end{center}
such that $\widetilde{\gamma}^{\ast}(A_x) \,= \,A_{\gamma^{-1}(x)}$. Also, we have the
following commutative diagram:

\begin{center}
\begin{tikzcd} 
\mathbb{P}(E')\times_Y \Bl_{\gamma^{-1}(x)}Y\arrow[r, "\widetilde{\phi_x}"] \arrow[d, "\widetilde{\tau}"] 
& \mathbb{P}(E') \arrow[r, "\gamma'"] \arrow[d,"\tau"] 
& \mathbb{P}(E_{\ast}) = \mathbb{P}(E')/\Gamma \arrow[d,"\rho"] \\
\Bl_{\gamma^{-1}(x)}Y \arrow[r, "\phi_x" ] & Y \arrow[r, "\gamma"]
& X = Y/\Gamma.
\end{tikzcd}
\end{center}
The universal property of a fiber product furnishes the following commutative diagram: 
\begin{center}
\begin{tikzcd}
\mathbb{P}(E')\times_Y \Bl_{\gamma^{-1}(x)}Y 
\arrow[drr, bend left, "\gamma'\circ \widetilde{\phi_x}"]
\arrow[ddr, bend right, "\widetilde{\gamma}\circ \widetilde{\tau}"]
\arrow[dr, "\Delta"] & & \\
& \mathbb{P}(E_{\ast}) \times_{X} \Bl_xX \arrow[r, "\widetilde{\psi}_x"] \arrow[d, "\widetilde{\rho}"]
& \mathbb{P}(E_{\ast}) \arrow[d, "\rho"] \\
& \Bl_xX \arrow[r, "\psi_x"]
& X
\end{tikzcd}
\end{center}
Recall that $\gamma'^{\ast}(\xi)\,=\, N(E_{\ast})\xi'$. Hence for any $\lambda \,> \,0$,
the following three statements are equivalent:
\begin{itemize}
\item the line bundle $\widetilde{\psi_x}^{\ast}(\xi)- \lambda\widetilde{\rho}^{\ast}A_x$
is nef,

\item $\Delta^{\ast}\bigl(\widetilde{\psi_x}^{\ast}(\xi)-
\lambda\widetilde{\rho}^{\ast}A_x\bigr)$ is nef, and

\item $N(E_{\ast})\widetilde{\phi_x}^{\ast}(\xi')-
\lambda\widetilde{\tau}^{*}(A_{\gamma^{-1}(x)})$ is nef.
\end{itemize}
Therefore, we conclude that
\begin{align*}
\varepsilon_{\ast}(E_{\ast},\,x) \,=\, N(E_{\ast})\cdot\sup\Bigl\{ \lambda\,\in\, \mathbb{R}_{>0}
\,\,\big\vert\,\, \widetilde{\phi_x}^{\ast}(\xi')- \lambda\widetilde{\tau}^{*}(A_{\gamma^{-1}(x)})
\,\ \rm{is\,~nef} \Bigr\}.
\end{align*}

Next we claim that 
\begin{equation}\label{ec}
\sup\Bigl\{ \lambda \,\in\, \mathbb{R}_{>0}\,\, \big\vert\,\, \widetilde{\phi_x}^{\ast}(\xi')-
\lambda\widetilde{\tau}^{*}(A_{\gamma^{-1}(x)})\,\ {\rm is\,~nef}\Bigr\}\,= \,\,
\inf\limits_{C\in \mathcal{C}_{\tau,\gamma^{-1}(x)}}
\left\{ \frac{\xi'\cdot C}{\sum\limits_{y\in \gamma^{-1}(x)}\mult_y\tau_{\ast}C}\right\}.
\end{equation}

To prove \eqref{ec}, let $\widetilde{C}$ be the strict transform of a curve $C \,\in \,
\mathcal{C}_{\tau,\gamma^{-1}(x)}$ under the blow up map
$$\widetilde{\phi_x} \,:\, \Bl_{\tau^{-1}(\gamma^{-1}(x))} \mathbb{P}(E')
\,\longrightarrow\, \mathbb{P}(E').$$ Note that $$ \widetilde{\tau}^{\ast}A_{\gamma^{-1}(x)}
\cdot \widetilde{C}\,=\, A_{\gamma^{-1}(x)}\cdot \widetilde{\tau}_{\ast}\widetilde{C}
\,=\, \sum\limits_{y\in \gamma^{-1}(x)}\mult_y\tau_{\ast}C.$$
Hence $\bigl\{\widetilde{\phi_x}^{\ast}(\xi')- \lambda\widetilde{\tau}^{\ast}
A_{\gamma^{-1}(x)}\bigr\} \cdot \widetilde{C} \,\geq\, 0$ if and only if
$$\xi'\cdot C \geq \,\lambda\, \sum\limits_{y\in \gamma^{-1}(x)}\mult_y\tau_{\ast}C .$$
If $C'$ is an irreducible curve in $\Bl_{\tau^{-1}(\gamma^{-1}(x))}\mathbb{P}(E')$,
and it is contained in the exceptional locus $\widetilde{\tau}^{-1}(A_{\gamma^{-1}(x)})$ of
$\widetilde{\phi_x}$, then 
$$\bigl\{\widetilde{\phi_x}^{\ast}(\xi')- \lambda\widetilde{\tau}^{\ast}
A_{\gamma^{-1}(x)}\bigr\} \cdot \widetilde{C} \,= \,
\bigl\{ \xi' \cdot \widetilde{{\phi}_x}_{\ast}\widetilde{C} -
\lambda A_{\gamma^{-1}(x)} \cdot \widetilde{\tau}_{\ast}C'\bigr\}\,\geq\, 0$$
for all $\lambda \,\geq\, 0$.

Any irreducible curve on $\Bl_{\tau^{-1}(\gamma^{-1}(x))} \mathbb{P}(E')$
satisfies one of the following three conditions:
\begin{itemize}
\item It is the strict transform of a curve $C \,\in\, \mathcal{C}_{\tau,\gamma^{-1}(x)}$.

\item It is contained in the exceptional locus $\widetilde{\tau}^{-1}(A_{\gamma^{-1}(x)})$
of $\widetilde{\phi_x}$.

\item It does not intersect $\widetilde{\tau}^{-1}(A_{\gamma^{-1}(x)})$. 
\end{itemize}
Now using the nefness of $\widetilde{\phi_x}^{\ast}(\xi')$ we conclude that \eqref{ec} holds. 
This also completes the proof of the theorem.
\end{proof}
We use the notation of Theorem \ref{thm2.2} in what follows.
\begin{thm}[{Seshadri criterion for parabolic ampleness}]\label{seshadri-crit}
Let $E_{\ast}$ be a parabolic vector bundle on a smooth irreducible projective variety 
$X$ such that the numerical class $\xi' \,\equiv\, \mathcal{O}_{\mathbb{P}(E')}\bigl(N(E_*)\bigr)$ is $\tau$-ample. Then
$E_{\ast}$ is parabolic ample if and only if $$\inf\limits_{x\in 
X}\varepsilon_{\ast}(E_{\ast},\,x)\,>\, 0,$$ where the infimum is taken over all points of $X$.
\end{thm}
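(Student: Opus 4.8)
The plan is to reduce the statement to the classical Seshadri criterion for ampleness of line bundles, applied to the tautological line bundle $\xi = \mathcal{O}_{\mathbb{P}(E_{\ast})}(1)$ on $\mathbb{P}(E_{\ast})$. Recall from Section~\ref{proj} that $E_{\ast}$ is parabolic ample if and only if $\xi$ is ample, and from the remark following Theorem~\ref{corl2.1} that for any $y \in \mathbb{P}(E_{\ast})$ we have the inequality $\varepsilon(\xi,\,y) \geq \min\{\varepsilon_{\ast}(E_{\ast},\,\rho(y)),\, N(E_{\ast})\}$. The classical theorem of Seshadri (referenced in the introduction via \cite[Theorem I.7.1]{Har70}) states that a nef line bundle is ample if and only if its Seshadri constant is bounded below by a positive number as the point varies over the whole variety. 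So the task is to compare $\inf_{y \in \mathbb{P}(E_{\ast})} \varepsilon(\xi,\,y)$ with $\inf_{x \in X}\varepsilon_{\ast}(E_{\ast},\,x)$.

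First I would establish the forward direction: if $E_{\ast}$ is parabolic ample, then $\xi$ is ample, so by the classical Seshadri criterion $\inf_{y}\varepsilon(\xi,\,y) > 0$. Using Theorem~\ref{corl2.1}, for any $x \in X$ the parabolic Seshadri constant $\varepsilon_{\ast}(E_{\ast},\,x) = \inf_{C \in \mathcal{C}_{\rho,x}} \frac{\xi \cdot C}{\mult_x \rho_{\ast}C}$; picking a point $y$ on the fiber $\rho^{-1}(x)$ that lies on a minimizing curve (or arguing directly that for $C \in \mathcal{C}_{\rho,x}$ and $y \in C \cap \rho^{-1}(x)$ one has $\mult_x \rho_{\ast}C \geq \mult_y C$, since the projection does not increase multiplicity along the fiber direction) shows $\varepsilon_{\ast}(E_{\ast},\,x) \geq \inf_{y}\varepsilon(\xi,\,y) > 0$, uniformly in $x$. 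Hence $\inf_{x \in X}\varepsilon_{\ast}(E_{\ast},\,x) > 0$.

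For the converse, suppose $\inf_{x \in X}\varepsilon_{\ast}(E_{\ast},\,x) =: \delta > 0$. I want to conclude $\xi$ is ample, again by the classical Seshadri criterion, so it suffices to bound $\varepsilon(\xi,\,y)$ below by a positive constant for all $y \in \mathbb{P}(E_{\ast})$. By the inequality from the remark, $\varepsilon(\xi,\,y) \geq \min\{\varepsilon_{\ast}(E_{\ast},\,\rho(y)),\, N(E_{\ast})\} \geq \min\{\delta,\, N(E_{\ast})\} > 0$, since $N(E_{\ast})$ is a fixed positive integer independent of $y$. Thus $\inf_{y}\varepsilon(\xi,\,y) > 0$, so $\xi$ is ample, and therefore $E_{\ast}$ is parabolic ample. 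Here one should note that $\xi$ is nef to begin with (because $E_{\ast}$ is assumed parabolic nef, and by Section~\ref{proj} parabolic nefness of $E_{\ast}$ is equivalent to nefness of $\xi$), so the classical criterion applies.

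The main obstacle I anticipate is making the comparison of multiplicities precise in the forward direction, i.e.\ verifying that a single point $y \in \rho^{-1}(x)$ can be chosen so that $\mult_y C$ controls $\mult_x \rho_{\ast}C$ for the curves achieving (or approaching) the infimum defining $\varepsilon_{\ast}(E_{\ast},\,x)$ — the curve $C$ may meet $\rho^{-1}(x)$ in several points, and one needs $\mult_x(\rho_\ast C) = \sum_{y \in C \cap \rho^{-1}(x)} \mult_y C$ or at least the inequality $\mult_y C \le \mult_x \rho_\ast C$ for some such $y$, which follows because $\rho$ is smooth and $\rho_\ast C = \rho(C)$ with multiplicity the degree of $C \to \rho(C)$ times the scheme-theoretic multiplicity. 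Alternatively, this direction can be bypassed entirely: since $N(E_{\ast})$ is a positive constant and the inequality $\varepsilon(\xi,\,y) \ge \min\{\varepsilon_{\ast}(E_{\ast},\,\rho(y)), N(E_{\ast})\}$ already gives the converse, the forward direction can instead be deduced from parabolic ampleness $\Rightarrow$ $\xi$ ample $\Rightarrow$ (classical Seshadri) $\inf_y \varepsilon(\xi,y) > 0$ $\Rightarrow$ in particular along fibers, and then relate $\varepsilon(\xi, y)$ for $y$ in a fiber to $\varepsilon_\ast(E_\ast, \rho(y))$ using that $\xi|_{\rho^{-1}(x)} = \mathcal{O}_{\mathbb{P}^{n-1}}(N(E_\ast))$ is ample on the fiber, so the restriction of the Seshadri obstruction to fiber curves is automatically positive and only the curves in $\mathcal{C}_{\rho,x}$ matter — which is exactly the content of Theorem~\ref{corl2.1}. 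I would present the argument in this streamlined form to keep the multiplicity bookkeeping to a minimum.
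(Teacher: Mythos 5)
Your converse direction is sound and is genuinely different from the paper's: you stay on $\mathbb{P}(E_{\ast})$ and combine the remark's inequality $\varepsilon(\xi,y)\,\geq\,\min\{\varepsilon_{\ast}(E_{\ast},\rho(y)),\,N(E_{\ast})\}$ with the classical Seshadri criterion for the nef line bundle $\xi$, whereas the paper passes to the orbifold side via Theorem \ref{thm2.2} and rules out a curve with $\xi'\cdot C=0$. That part works (the underlying multiplicity fact $\mult_y C\,\leq\,\mult_{\rho(y)}\rho_{\ast}C$, which the remark needs, is true: for a general hyperplane section $H$ through $x=\rho(y)$ one has $\mult_x\rho_{\ast}C=(\rho_{\ast}C\cdot H)_x=\sum_{y'\in C\cap\rho^{-1}(x)}(C\cdot\rho^{\ast}H)_{y'}\,\geq\,\mult_y C$).

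The forward direction, however, has a genuine gap. To get $\varepsilon_{\ast}(E_{\ast},x)\,\geq\,\inf_y\varepsilon(\xi,y)$ you would need to bound $\mult_x\rho_{\ast}C$ \emph{above} by $\mult_y C$ for some $y\in C\cap\rho^{-1}(x)$, but the only inequality available is the opposite one, $\mult_y C\,\leq\,\mult_x\rho_{\ast}C$; combined with $\varepsilon(\xi,y)\,\leq\,\frac{\xi\cdot C}{\mult_y C}$ this yields no lower bound on $\frac{\xi\cdot C}{\mult_x\rho_{\ast}C}$. Your proposed fallback $\mult_x\rho_{\ast}C=\sum_{y}\mult_y C$ is false: the pushforward can have strictly larger multiplicity than the sum of the upstairs multiplicities, e.g.\ when $C$ is smooth but tangent to the fiber direction at $y$ (locally $t\mapsto((t^2,t^3),t)$ projecting to a cusp), or when $C$ meets $\rho^{-1}(x)$ at several points whose images create a multibranch singularity of $\rho(C)$ at $x$. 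The excess is unbounded, so no uniform constant survives, and the ``streamlined'' variant at the end of your proposal runs into exactly the same wrong-way comparison. The paper avoids this entirely: it writes $\xi=(\xi-\epsilon\rho^{\ast}h)+\epsilon\rho^{\ast}h$ for an ample class $h$ on $X$, drops the first (ample) term, and applies the projection formula $\rho^{\ast}h\cdot C=h\cdot\rho_{\ast}C\,\geq\,\varepsilon(h,x)\,\mult_x\rho_{\ast}C$, so that the required lower bound $\varepsilon_{\ast}(E_{\ast},x)\,\geq\,\epsilon\,\varepsilon(h,x)$ comes from the Seshadri constant of $h$ computed on $X$ itself, with no comparison of multiplicities between $\mathbb{P}(E_{\ast})$ and $X$. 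You should replace your forward direction by this (or an equivalent) argument.
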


\begin{proof}
Suppose that $E_{\ast}$ is parabolic ample. Then the numerical equivalence class of the 
tautological line bundle $\xi\,=\,[\mathcal{O}_{\mathbb{P}(E_{\ast})}(1)]$ is ample on 
$\mathbb{P}(E_{\ast})$. Let $h$ be an ample divisor class on $X$ such that 
$\xi-\delta\rho^{\ast}h$ is ample for a sufficiently small $\delta\,>\,0$. Now for all 
$C\,\in\,\mathcal{C}_{\rho,x}$, we have $$(\xi-\delta\rho^{\ast}h)\cdot C\,>\,0.$$ Then
\begin{align*}
\frac{\xi\cdot C}{ \text{mult}_x\rho_{\ast}C}\,=\,
\frac{(\xi-\delta\rho^{\ast}h)\cdot C}{\text{mult}_x\rho_{\ast}C}+
\frac{\delta\rho^{\ast}h\cdot C}{\text{mult}_x\rho_{\ast}C}\,\geq\,
\frac{\delta\rho^{\ast}h\cdot C}{\text{mult}_x\rho_{\ast}C}\,\geq\, 
\delta \frac{ h \cdot\rho_{\ast} C}{\text{mult}_x\rho_{\ast}C}\,\geq\, 
\delta \varepsilon(h,\,x)\,>\,0. 
\end{align*}
The last inequality follows using the usual Seshadri criterion for ample divisors,
because $h$ is ample; see \cite[Theorem 1.4.13]{Laz}. Hence
\begin{align*}
\inf\limits_{C\in\mathcal{C}_{\rho,x}}\Big\{\frac{\xi\cdot C}{ \text{mult}_x\rho_{\ast}C}\Big\}
\,\geq\, \delta \varepsilon(h,\,x)\,>\,0.
\end{align*}
Consequently, from Theorem \ref{corl2.1} it is deduced
that $$\varepsilon_{\ast}(E_{\ast},\,x\,)\geq\, \delta \varepsilon(h,\,x)\, >\, 0$$
for every point $x\,\in\, X$. Thus we have
$$\inf\limits_{x\in X}\varepsilon_{\ast}(E_{\ast},\,x) \,\ge\,
\delta \inf\limits_{x\in X}\varepsilon(h,\,x) \,>\, 0.$$
The last inequality again follows from
the usual Seshadri criterion applied to the ample divisor $h$. 

To prove the converse,
assume that $\inf\limits_{x\in X}\varepsilon_{\ast}(E_{\ast},\,x) \,> \,0$
for all $x\, \in\, X$. Therefore, using Theorem \ref{thm2.2} we have
\begin{align}\label{seq4}
\inf\limits_{x\in X}\,\,\inf\limits_{C\in \mathcal{C}_{\tau,\gamma^{-1}(x)}}
\left\{\frac{\xi'\cdot C}{\sum\limits_{y\in \gamma^{-1}(x)}\mult_y\tau_{\ast}C}\right\}\,>\, 0.
\end{align}
Assume that $E_*$ is not parabolic ample, i.e., $\xi'\,\equiv\, \mathcal{O}_{\mathbb{P}(E')}(N(E_*))$ is not ample. Note that as
$\xi'$ is $\tau$-ample, we have $\xi'$ is nef, i.e., $E_*$ is parabolic nef. Then by the Seshadri criterion for ample divisors, one
has $$\inf\limits_{z\in \mathbb{P}(E')}\varepsilon(\xi',z) \,=\, 0.$$
So for each $m\,\in\, \mathbb{N}$, there exist points $z_m \,\in\, \mathbb{P}(E')$ and irreducible curves $C_m\,\subset\,
\mathbb{P}(E')$ through $z_m$ such that $$\xi'\cdot C_m \,<\, \frac{1}{m}\mult_{z_m}C_m.$$
We claim that the curve $C_m$ is not contracted by $\tau$ for infinitely many $m.$

If the claim is not true, $C_m$ are contracted by $\tau$ for all $m\,\gg\, 0$.
Then for all $m\,\gg\, 0$
\begin{align*}
\inf\limits_{y\in Y}\inf\limits_{z\in \tau^{-1}(y)}\varepsilon\bigl(\xi'\vert_{\tau^{-1}(y)},z\bigr)
\,\leq\, \frac{\xi'\cdot C_m}{\mult_{z_m}C_m} \,<\, \frac{1}{m}, 
\end{align*}
so that 
\begin{align}\label{seq6}
 \inf\limits_{y\in Y}\inf\limits_{z\in \tau^{-1}(y)}\varepsilon\bigl(\xi'\vert_{\tau^{-1}(y)},z\bigr) \,=\, 0.
\end{align}

Choose an ample divisor $h$ on $Y$ so that $\xi'+\tau^*h$ is ample. Hence by the Seshadri ampleness criterion for ample
divisors, we conclude that
\begin{align*}
 \inf\limits_{y\in Y}\inf\limits_{z\in \tau^{-1}(y)}\varepsilon\bigl((\xi'+\tau^*h)\vert_{\tau^{-1}(y)},z\bigr) \,=\,
\inf\limits_{y\in Y}\inf\limits_{z\in \tau^{-1}(y)}\varepsilon\bigl(\xi'\vert_{\tau^{-1}(y)},z\bigr) \,>\, 0,
\end{align*}
which in fact contradicts \eqref{seq6}. This proves our claim. 

Therefore 
$\tau\vert_{C_{m_k}} \,:\, C_{m_k} \,\longrightarrow \,Y$ is finite for 
an infinite sequence $\{m_k\}_{k=1}^{\infty}$ of distinct positive integers. 
Let $y_{m_k} = \tau(z_{m_k})$. Then by \cite[Lemma 2.3]{F21}, we have $$\mult_{y_{m_k}}\tau_*{C_{m_k}} \geq \mult_{z_{m_k}}C_{m_k}.$$ 
This contradicts \eqref{seq4}. Thus $E_{\ast}$ is parabolic ample.
\end{proof}

The next theorem gives an upper bound for the parabolic Seshadri constants.

\begin{thm}\label{bound}
Let $E_*$ be a parabolic nef vector bundle on $X$, and $x\,\in\, X$ a point. Then 
$$\varepsilon_*(E_*,\,x) \,\leq\, \Bigl(\frac{N(E_*)^{\dim \tau(W)}\xi'^{\dim W}\cdot
[W]}{\binom{\dim W}{\dim\tau(W)} \cdot \lvert \Gamma \rvert(\xi'^{\dim W_{\gamma^{-1}(x)}}
[W_{\gamma^{-1}(x)}])}\Bigr)^{\frac{1}{\dim\tau(W)}},$$
as $W$ ranges through the subvarieties of $\mathbb{P}(E')$ that meet
$\tau^{-1}(\gamma^{-1}(x))$ without being contained in $\tau^{-1}(\gamma^{-1}(x))$, where
$\tau$ and $\gamma$ are as in Theorem $\ref{thm2.2}$. 
In the above inequality, $W_{\gamma^{-1}(x)} := \tau^{-1}(\gamma^{-1}(x))\bigcap W$.
\end{thm}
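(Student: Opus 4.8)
The plan is to reduce the parabolic statement to the known upper bound for Seshadri constants of ordinary nef vector bundles, applied to the orbifold bundle $E'$ on $Y$, and then unwind the normalizations coming from the covering $\gamma\,:\,Y\,\longrightarrow\,X$ and the twist $N(E_{\ast})$. By Theorem \ref{thm2.2} we have
\[
\varepsilon_{\ast}(E_{\ast},\,x)\,=\,N(E_{\ast})\cdot\varepsilon\bigl(E',\,\gamma^{-1}(x)\bigr),
\]
where the right-hand side is the multipoint Seshadri constant of the ordinary nef bundle $E'$ at the finite set $\gamma^{-1}(x)\,\subset\,Y$, computed on $\mathbb{P}(E')$ via $\widetilde{\phi_x}^{\ast}(\xi')-\lambda\widetilde{\tau}^{\ast}(E_{\gamma^{-1}(x)})$. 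So it suffices to bound this multipoint Seshadri constant from above in terms of subvarieties $W\,\subset\,\mathbb{P}(E')$ meeting $\tau^{-1}(\gamma^{-1}(x))$ properly.

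The first step is to recall (or quickly reprove) the standard volume/self-intersection upper bound for multipoint Seshadri constants: if $\lambda$ is such that $\widetilde{\phi_x}^{\ast}(\xi')-\lambda\widetilde{\tau}^{\ast}(E_{\gamma^{-1}(x)})$ is nef, then for the strict transform $\widetilde W$ of any $W$ as above, one has $\bigl(\widetilde{\phi_x}^{\ast}(\xi')-\lambda\widetilde{\tau}^{\ast}(E_{\gamma^{-1}(x)})\bigr)^{\dim W}\cdot[\widetilde W]\,\geq\,0$. Expanding this intersection product, using $\widetilde{\phi_x}^{\ast}(\xi')^{\dim W}\cdot[\widetilde W]\,=\,\xi'^{\dim W}\cdot[W]$ and that the only other surviving term (because $\widetilde{\tau}^{\ast}(E_{\gamma^{-1}(x)})$ restricted to the exceptional locus behaves like $-\,\mathcal{O}(1)$ on the projectivized normal bundle directions) is the top self-intersection of the exceptional part, one gets an inequality of the shape
\[
\xi'^{\dim W}\cdot[W]\,\geq\,\lambda^{\dim\tau(W)}\binom{\dim W}{\dim\tau(W)}\bigl(\xi'^{\dim W_{\gamma^{-1}(x)}}\cdot[W_{\gamma^{-1}(x)}]\bigr).
\]
Here $\dim\tau(W)$ enters as the codimension of a generic fiber of $W\to\tau(W)$ is $\dim W-\dim\tau(W)$, and the blow-up is along $\tau^{-1}(\gamma^{-1}(x))$ whose intersection with $W$ is $W_{\gamma^{-1}(x)}$; the binomial coefficient is the multinomial factor from expanding the $\dim W$-th power and keeping the term with $\dim\tau(W)$ copies of $\xi'$.

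The second step is bookkeeping with the Galois action. Since $\gamma^{-1}(x)$ has $|\Gamma|/|\mathrm{Stab}|$ points and the $\Gamma$-action on $\mathbb{P}(E')$ is compatible, the self-intersection $\xi'^{\dim W_{\gamma^{-1}(x)}}\cdot[W_{\gamma^{-1}(x)}]$ already sums over all points of $\gamma^{-1}(x)$; the factor $|\Gamma|$ in the denominator of the claimed bound accounts for the degree of the covering $\gamma'\,:\,\mathbb{P}(E')\,\longrightarrow\,\mathbb{P}(E_{\ast})$, i.e.\ for passing from intersection numbers on $\mathbb{P}(E')$ back to $\mathbb{P}(E_{\ast})$, and combines with $\gamma'^{\ast}\xi\,=\,N(E_{\ast})\xi'$ to produce the $N(E_{\ast})^{\dim\tau(W)}$ in the numerator once one solves the displayed inequality for $\lambda$ and multiplies by $N(E_{\ast})$ as dictated by Theorem \ref{thm2.2}. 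Taking the supremum over admissible $\lambda$ and then the infimum over all such $W$ yields the stated bound.

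The main obstacle I expect is the precise intersection-theoretic computation on the blow-up $\Bl_{\tau^{-1}(\gamma^{-1}(x))}\mathbb{P}(E')$: one must verify that expanding $\bigl(\widetilde{\phi_x}^{\ast}\xi'-\lambda\widetilde{\tau}^{\ast}E_{\gamma^{-1}(x)}\bigr)^{\dim W}\cdot[\widetilde W]$ really kills all mixed terms except the two named ones, which requires knowing that $\widetilde{\tau}^{\ast}E_{\gamma^{-1}(x)}$ is supported on the exceptional divisor and that its powers against $[\widetilde W]$ compute $(-1)^{\dim W-\dim\tau(W)}$ times Segre-class contributions of the normal bundle of $\tau^{-1}(\gamma^{-1}(x))$ restricted to $W_{\gamma^{-1}(x)}$; since $\tau^{-1}(\gamma^{-1}(x))$ is a disjoint union of projective spaces $\mathbb{P}^{n-1}$ on which $\xi'$ is $\mathcal{O}(1)$, these Segre terms collapse to the single power $\xi'^{\dim W_{\gamma^{-1}(x)}}\cdot[W_{\gamma^{-1}(x)}]$, but making this collapse rigorous (and getting the binomial coefficient exactly right) is the delicate point. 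Everything else — the reduction via Theorems \ref{thm2.2}, the nefness input, and the Galois normalization — is routine once that lemma is in hand.
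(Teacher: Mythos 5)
Your proposal follows essentially the same route as the paper's proof: reduce to the orbifold bundle via Theorem \ref{thm2.2}, intersect the $\dim W$-th power of the nef class $N(E_*)\widetilde{\phi_x}^{\ast}\xi'-\varepsilon_*\widetilde{\tau}^{\ast}E_{\gamma^{-1}(x)}$ with the strict transform of $W$, expand binomially keeping only the two extreme terms, and normalize by $\lvert\Gamma\rvert$ and $N(E_*)$. The only divergence is at the step you flag as delicate: the paper does not prove the mixed terms vanish via Segre classes, but discards them by noting that $\widetilde{\phi}_{x\ast}\bigl(-(-\widetilde{\tau}^{\ast}E_{\gamma^{-1}(x)})^k\bigr)$ is pseudoeffective (since $-E_{\gamma^{-1}(x)}$ restricted to the exceptional divisor is ample), so each mixed term pairs non-positively with the nef class $\widetilde{\phi_x}^{\ast}(\xi'^{\dim W-k})$ and the inequality survives their removal.
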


\begin{proof}
Consider the following commutative diagram
\begin{center}
\begin{tikzcd} 
\mathbb{P}(\phi_x^{\ast}E') \,=\, \mathbb{P}(E')\times_Y \Bl_{\gamma^{-1}(x)}Y \arrow[r, "\widetilde{\phi_x}"] \arrow[d, "\widetilde{\tau}"]
& \mathbb{P}(E') \arrow[d,"\tau"]\\
\Bl_{\gamma^{-1}(x)}Y \arrow[r, "\phi_x" ]
& Y
\end{tikzcd}
\end{center}
where $\tau$ is the projectivization of the orbifold bundle $E'$ over $Y$, and $\phi_x$ is 
the blow up of $Y$ at $\gamma^{-1}(x)$. Recall from Theorem \ref{thm2.2} that 
$$\varepsilon_{\ast}(E_{\ast},\,x)\,=\, N(E_{\ast})\cdot
\sup\Bigl\{ \lambda \in \mathbb{R}_{>0} \,\,\big\vert\,\, 
\widetilde{\phi_x}^{\ast}(\xi')- \lambda\widetilde{\tau}^{*}(A_{\gamma^{-1}(x)}) 
\ \, \text{is\, nef}\Bigr\}.$$ Let $W\,\subseteq\, \mathbb{P}(E')$ be a 
subvariety that meets $\tau^{-1}(\gamma^{-1}(x))$ without being contained in 
$\tau^{-1}(\gamma^{-1}(x))$. Let $W' \,\subset\, \mathbb{P}(\phi_x^{\ast}E')$ be the strict 
transform of $W$ by the blow up morphism $\widetilde{\phi}_x$.
Then by the above observation we have 
$$\bigl(N(E_{\ast})\widetilde{\phi}_x^*(\xi')-\varepsilon_*(E_*,\,x)\widetilde{\tau}^*(A_{\gamma^{-1}(x)})\bigr)^{\dim 
W'}\cdot W' \,\geq\, 0.$$
We now specialize to the situation where 
$$W'\,=\,\mathbb{P}(\phi_x^{\ast}E').$$

Let $\dim Y\,=\,n$ and $\dim \mathbb{P}(E')\,=\, n+e$. Thus
$$\Bigl(N(E_{\ast})\widetilde{\phi_x}^*(\xi')-
\varepsilon_*(E_*,\,x)\widetilde{\tau}^*\bigl(A_{\gamma^{-1}(x)}\bigr)\Bigr)^{n+e}\,\geq\, 0.$$
Note that the self-intersection number $A_{\gamma^{-1}(x)}^k\,=\,0$ for $k\,>\,n$. Expanding binomially we thus get
$$ \sum\limits_{k=0}^n\binom{n+e}{k}\bigl(-\varepsilon_*(E_*,\,x)
\widetilde{\tau}^*A_{\gamma^{-1}(x)}\bigr)^k N(E_{\ast})^{n+e-k}
\widetilde{\phi_x}^*(\xi'^{n+e-k}) \,\geq\, 0.$$
Note that $\widetilde{\phi}_{x_{\ast}}\bigl(-(-\widetilde{\tau}^*A_{\gamma^{-1}(x)})^k\bigr)$
are pseudoeffective since the restriction $-A_{\gamma^{-1}(x)}\big\vert_{A_{\gamma^{-1}(x)}}$
is ample. Hence using the projection formula we conclude that
\begin{equation}\label{12}
N(E_{\ast})^{n+e} \xi'^{n+e} + \binom{n+e}{n}N(E_{\ast})^e
\bigl[-\varepsilon_*(E_*,\,x)\widetilde{\tau}^*A_{\gamma^{-1}(x)}\bigr]^n
\widetilde{\phi_x}^*(\xi'^e) \,\geq\, 0.
\end{equation}

Now we have the following equalities: 
\begin{eqnarray*}
\bigl[-\varepsilon_*(E_*,\,x)\widetilde{\tau}^*A_{\gamma^{-1}(x)}\bigr]^n\widetilde{\phi_x}^*(\xi'^e) &=& \varepsilon_*^n(E_*,\,x)\widetilde{\phi}_{x_{\ast}}\bigl((-\widetilde{\tau}^*A_{\gamma^{-1}(x)})^n\bigr)\xi'^{e}\\
&=& \varepsilon_*^n(E_*,\,x)\tau^*\bigl[ (\phi_{x_{\ast}}(-A_{\gamma^{-1}(x)}))^n\bigr] \xi'^e\\ 
&=& \varepsilon_*^n(E_*,\,x)\tau^*(-\gamma^{-1}(x))\xi'^e\\
&=& -\varepsilon_*^n(E_*,\,x)\lvert \Gamma\rvert\bigl(\xi'^e\cdot [W_{{\gamma}^{-1}(x)}]\bigr), 
\end{eqnarray*}
where $W_{\gamma^{-1}(x)}\,=\,\tau^{-1}(\gamma^{-1}(x))\bigcap W$. 

{}From \eqref{12} it follows that
$$N(E_{\ast})^{n}\cdot\xi'^{n+e}-\binom{n+e}{n}\varepsilon_*^n(E_*,\,x)\lvert
\Gamma\rvert\bigl(\xi'^e\cdot [W_{{\gamma}^{-1}(x)}]\bigr) \,\geq\, 0.$$
Rearranging the terms we obtain the required bound on $\varepsilon_*(E_*,\,x)$.
\end{proof}

For a parabolic vector bundle $E_{\ast}$, we define $\mu^{par}_{\min}(E_{\ast})$ to be the 
parabolic slope of the minimal parabolic semistable subquotient of $E_{\ast}$, or in other 
words, $\mu^{par}_{\min}(E_{\ast})$ is the parabolic slope of the final piece of the graded 
object for the Harder-Narasimhan filtration of $E_{\ast}$. Note that if $E'$ is the 
orbifold bundle on $Y$ corresponding to $E_{\ast}$ for the Galois morphism $\gamma \,:\, Y 
\,\longrightarrow\, X$ as in (\ref{seq2}), then we have $$\mu_{\min}(E') \,=\,
\lvert \Gal(\gamma)\rvert \cdot \mu^{par}_{\min}(E_{\ast}).$$

\begin{thm}
Let $E_{\ast}$ be a parabolic ample vector bundle over
a smooth irreducible projective curve $C$ with parabolic divisor $D$. Then for any
point $x\,\in\, C$, the parabolic Seshadri constant satisfies the following: 
$$\varepsilon_{\ast}(E_{\ast},\,x)\,=\, N(E_{\ast})\cdot\mu^{par}_{\min}(E_{\ast})\ \text{ when
 \,\,$x\,\notin\, D$,\, and}$$
$$\varepsilon_{\ast}(E_{\ast},\,x)\,\geq\, N(E_{\ast})\cdot \mu^{par}_{\min}(E_{\ast})
\ \text{ when\, $x\,\in\, D$.}$$
In particular, $\varepsilon_{\ast}(E_{\ast},\,x) \,\geq\,
\frac{N(E_{\ast})}{\rank(E)}$ for every point $x\,\in\, C$.
\end{thm}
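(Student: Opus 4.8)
The strategy is to reduce the computation of $\varepsilon_{\ast}(E_{\ast},\,x)$ on the curve $C$ to the corresponding Seshadri computation for the orbifold bundle $E'$ on $Y$ via Theorem \ref{thm2.2}, and then invoke the known description of Seshadri constants of nef vector bundles on curves in terms of the minimal slope (this is the curve case of the theory developed in \cite{FM21}, or can be proved directly from the Harder--Narasimhan filtration). By Theorem \ref{thm2.2}, for any $x \in C$ we have
$$
\varepsilon_{\ast}(E_{\ast},\,x) \,=\, N(E_{\ast}) \cdot \inf\limits_{C'\in \mathcal{C}_{\tau,\gamma^{-1}(x)}} \left\{ \frac{\xi'\cdot C'}{\sum\limits_{y\in \gamma^{-1}(x)}\mult_y\tau_{\ast}C'}\right\},
$$
so everything is controlled by the geometry of $\mathbb{P}(E')$ over the smooth projective curve $Y$ and the fibers $\tau^{-1}(y)$ for $y \in \gamma^{-1}(x)$.

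First I would treat the case $x \notin D$. Since $\gamma$ is étale over $X \setminus D$, the fiber $\gamma^{-1}(x)$ consists of $\lvert \Gamma \rvert$ reduced points $y_1,\dots,y_{|\Gamma|}$, permuted transitively by $\Gamma$, none of which lies on $\widetilde{D}$. For an irreducible curve $C'$ on $\mathbb{P}(E')$ not contained in a fiber of $\tau$, the pushforward $\tau_{\ast}C'$ is an effective multiple of $[Y]$ (as $Y$ is a curve), and $\sum_y \mult_y \tau_{\ast}C' = \deg(\tau_{\ast}C') \cdot \lvert\Gamma\rvert$ after accounting for the $\Gamma$-orbit structure; I must be careful here with how multiplicities at several points combine, but on a smooth curve $\mult_y$ of an effective cycle is just its local degree, so the sum over the orbit is $\lvert \Gamma\rvert$ times the degree of the map $C' \to Y$. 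Then the infimum becomes
$$
\inf_{C'} \frac{\xi'\cdot C'}{\lvert\Gamma\rvert \cdot \deg(C' \to Y)} \,=\, \frac{1}{\lvert\Gamma\rvert}\,\varepsilon(E',\,y_1)
$$
and the classical formula for nef vector bundles on curves gives $\varepsilon(E',\,y) = \mu_{\min}(E')$ for every $y$ (independence of the point is exactly the curve phenomenon: minimal slope is computed by quotient bundles, which are defined globally). Combining with the identity $\mu_{\min}(E') = \lvert\Gal(\gamma)\rvert\cdot\mu^{par}_{\min}(E_{\ast})$ recalled just before the theorem, and with the factor $N(E_{\ast})$ from Theorem \ref{thm2.2}, yields $\varepsilon_{\ast}(E_{\ast},\,x) = N(E_{\ast})\cdot\mu^{par}_{\min}(E_{\ast})$.

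For $x \in D$, the fiber $\gamma^{-1}(x)$ may be non-reduced and the points lie on the ramification divisor $\widetilde D$; the $\Gamma$-orbit has size strictly less than $\lvert\Gamma\rvert$ and the local intersection multiplicities $\mult_y \tau_{\ast}C'$ are inflated by the ramification index. Concretely, for each such $y$ the ramification index $e_y = \lvert\Gamma\rvert/\#(\Gamma\cdot x\text{-orbit})$ divides $\mult_y\tau_{\ast}C'$ when $C'$ is a $\Gamma$-translate-stable family, or more simply, $\sum_y \mult_y\tau_{\ast}C' \le \lvert\Gamma\rvert\cdot\deg(C'\to Y)$ need not hold — rather the denominator can only be \emph{larger} relative to the generic case because ramification forces higher tangency. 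Making this precise is the main obstacle: one must show that for every $C' \in \mathcal{C}_{\tau,\gamma^{-1}(x)}$,
$$
\frac{\xi'\cdot C'}{\sum_{y\in\gamma^{-1}(x)}\mult_y\tau_{\ast}C'} \,\ge\, \frac{1}{\lvert\Gamma\rvert}\,\mu_{\min}(E'),
$$
which I would deduce by comparing with a general point $x' \notin D$: since $\mu_{\min}(E')$ bounds $\xi'\cdot C'/\deg(C'\to Y)$ from below for \emph{every} curve (this uses only that $E'$ is nef, via the standard inequality $\xi'\cdot C' \ge \mu_{\min}(E')\deg(C'\to Y)$ coming from nefness of $E'\otimes(\text{line bundle of slope }-\mu_{\min})$-type twists, i.e. $S^k E'$ having no quotients of slope $<k\mu_{\min}$), and since $\sum_y \mult_y\tau_{\ast}C' \ge \deg(C'\to Y)$ always (each fiber point contributes at least the local degree at a point of $\gamma^{-1}(x)$, and $\gamma^{-1}(x)$ is nonempty of total multiplicity $\ge 1$), wait — I need the sum to be \emph{at most} $\lvert\Gamma\rvert\deg$; in fact $\sum_{y\in\gamma^{-1}(x)}\mult_y\tau_{\ast}C'$ is exactly the intersection number $\widetilde\tau^{\ast}E_{\gamma^{-1}(x)}\cdot\widetilde{C'}$, and $E_{\gamma^{-1}(x)} = \widetilde\gamma^{\ast}E_x$, so pulling everything down to $\mathbb{P}(E_{\ast})$ via $\gamma'$ and using $\gamma'^{\ast}\xi = N(E_{\ast})\xi'$ together with $\deg\gamma = \lvert\Gamma\rvert$ shows $\sum_y\mult_y\tau_{\ast}C' = \lvert\Gamma\rvert\cdot\mult_x\rho_{\ast}(\gamma'_{\ast}C')/N(E_{\ast})$-type bookkeeping — the cleanest route is to directly reprove the bound on $\mathbb{P}(E_{\ast})$ itself: $\xi\cdot D' \ge N(E_{\ast})\mu^{par}_{\min}(E_{\ast})\cdot\mult_x\rho_{\ast}D'$ for every $D' \in \mathcal{C}_{\rho,x}$, using that $S^k(E_{\ast})$ is parabolic nef with minimal parabolic slope $k\mu^{par}_{\min}(E_{\ast})$ so its tautological class is nef after the appropriate twist. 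This gives $\varepsilon_{\ast}(E_{\ast},\,x) \ge N(E_{\ast})\mu^{par}_{\min}(E_{\ast})$ for all $x$, with equality off $D$ from the computation above. Finally, since $E_{\ast}$ is parabolic ample, $\mu^{par}_{\min}(E_{\ast}) > 0$; and because the minimal Harder--Narasimhan piece has rank $\le \rank(E)$ while its parabolic degree is a positive integer multiple of $1/N(E_{\ast})$ divided appropriately — more simply, parabolic ampleness forces $\mathrm{par\_deg}$ of every quotient to be positive and the slopes are bounded below by $1/\rank(E)$ after clearing denominators — we obtain $\varepsilon_{\ast}(E_{\ast},\,x) \ge N(E_{\ast})/\rank(E) \ge 1/\rank(E)$, giving the last assertion.
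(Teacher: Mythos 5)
Your overall strategy coincides with the paper's: pass to the orbifold bundle $E'$ on the Galois cover $\gamma\colon Y\to C$ via Theorem \ref{thm2.2}, observe that no blow-up is needed since the base is a curve, invoke Hacon's identity $\varepsilon(E',y)=\mu_{\min}(E')$, and translate back using $\mu_{\min}(E')=\lvert\Gamma\rvert\,\mu^{par}_{\min}(E_{\ast})$. Your treatment of $x\notin D$ is correct. The genuine gap is in the case $x\in D$, where you assert that the local multiplicities $\mult_y\tau_{\ast}C'$ are ``inflated by the ramification index'' and that ``the denominator can only be larger relative to the generic case.'' This is backwards, and your subsequent attempts to repair it (pulling the computation down to $\mathbb{P}(E_{\ast})$, or re-proving a bound via nefness of twists of $S^k(E_{\ast})$) are left unexecuted. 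The point you are missing is elementary: since $Y$ is a smooth curve, any irreducible $C'\subset\mathbb{P}(E')$ dominating $Y$ has $\tau_{\ast}C'=d\,[Y]$ with $d=\deg(C'\to Y)$, so $\mult_y\tau_{\ast}C'=d$ for \emph{every} $y\in Y$, independently of any ramification of $\gamma$; hence
$$\sum_{y\in\gamma^{-1}(x)}\mult_y\tau_{\ast}C'\,=\,\#\gamma^{-1}(x)\cdot d\,\leq\,\lvert\Gamma\rvert\, d,$$
with strict inequality exactly when $x$ lies in the branch locus, i.e.\ when $x\in D$. The denominator therefore \emph{shrinks} over $D$, and that is precisely why the equality valid off $D$ degenerates to the inequality $\varepsilon_{\ast}(E_{\ast},\,x)\geq N(E_{\ast})\,\mu^{par}_{\min}(E_{\ast})$ on $D$. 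This is how the paper argues, phrased via the fiber class: $\tau^{-1}(\gamma^{-1}(x))$ is numerically $\#\gamma^{-1}(x)\cdot f$, so $\varepsilon_{\ast}(E_{\ast},\,x)=\frac{N(E_{\ast})}{\#\gamma^{-1}(x)}\,\varepsilon(E',y)$, with $\#\gamma^{-1}(x)=\lvert\Gamma\rvert$ off $D$ and $\#\gamma^{-1}(x)\leq\lvert\Gamma\rvert$ on $D$.

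A secondary remark: your justification of the final bound $\varepsilon_{\ast}(E_{\ast},\,x)\geq N(E_{\ast})/\rank(E)$ (``slopes bounded below by $1/\rank(E)$ after clearing denominators'') is an assertion rather than an argument. The paper is equally terse at this point, but you should at least make explicit which denominators are being cleared and how positivity of the parabolic degrees of all quotients produces the claimed lower bound on $\mu^{par}_{\min}(E_{\ast})$.
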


\begin{proof}
As in \eqref{seq2}, let $\gamma\,:\,C'\,\longrightarrow\, C$ be a
ramified Galois covering with Galois group 
$\Gal(\gamma)\, = \,\Gamma$. Then the cardinality of each fiber is
$$\lvert\gamma^{-1}(x)\rvert\,=
\,\lvert\Gamma\rvert$$ if $x\,\notin\, D$, and it is
$\lvert\gamma^{-1}(x)\rvert\,\leq\,\lvert\Gamma\rvert$ if $x\,\in\, D$.

Since $C$ is a curve, the blow up map $\phi_x$ in Theorem \ref{thm2.2} is the identity
morphism, and hence $\widetilde{\phi_x}$ is also the identity morphism. Consequently,
\begin{align*}
\varepsilon_{\ast}(E_{\ast},\,x)\,\,=\,\, N(E_{\ast})\sup\Bigl\{ \lambda\,\in\, \mathbb{R}_{>0}
\,\,\big\vert\,\, \xi'- \lambda\tau^{-1}(\gamma^{-1}(x))\ \text{ is\, nef}\Bigr\}.
\end{align*}
Denote the numerical class of fiber of $\tau$ by $f$. Then 
\begin{align*}
\varepsilon_{\ast}(E_{\ast},\,x) \,\,=\,\, N(E_{\ast})\cdot\sup\Bigl\{ \lambda\,\in\, \mathbb{R}_{>0}
\,\,\big\vert\,\, \xi'- \lambda\big\vert\gamma^{-1}(x)\big\vert f\ \text{ is\, nef}\Bigr\}.
\end{align*}
Therefore,
\begin{align*}
\varepsilon_{\ast}(E_{\ast},\,x)\, =\, N(E_{\ast})\frac{1}{\big\vert\gamma^{-1}(x)\big\vert}
\varepsilon(E',\,y),
\end{align*}
where $y\,\in \,C'$. From \cite[Theorem 3.1]{Hac00}, it follows that $\varepsilon(E',\,y)
\,=\,\mu_{\text{min}}(E').$ Using \cite[Equation 2.15]{B98}, we also have
$\mu_{\text{min}}(E') \,= \,\big\vert\Gamma\big\vert \mu_{\text{min}}^\text{par}(E_{\ast})$.
Hence the theorem follows.
 \end{proof}

\begin{xrem}
If $E_{\ast}$ is a parabolic nef bundle but it is not ample, then the corresponding orbifold
bundle $E'$ is also nef but not ample. In this case
$$\varepsilon(E',\,y)\,=\,\mu_{\text{min}}(E')\,=\,0.$$
Hence $\varepsilon_{\ast}(E_{\ast},\,x)\,=\,0$.
\end{xrem}

\section{Properties of parabolic Seshadri constants}\label{properties-sc}

In this section we define the notion of \textit{multipoint Seshadri constants} for nef 
vector bundles on irreducible projective varieties, generalizing the case of a single
point. We will then give a description of the parabolic Seshadri constants using
restriction to curves.

\begin{defi}\label{defi3.1}
Let $E$ be a vector bundle on an irreducible complex projective variety $Y$, and
let $Z\,=\,\{z_1,\,\cdots,\,z_n\}$ be a set of distinct points of $Y$. Let
$\phi_z \,:\, \Bl_ZY\,\longrightarrow\, Y$ be the blow-up of $Y$ along $Z$.
Consider the following commutative diagram:
\begin{center}
\begin{tikzcd} 
\mathbb{P}(\phi^{\ast}E) \arrow[r, "\widetilde{\phi_z}"] \arrow[d, "\widetilde{\tau}"]
 & \mathbb{P}(E) \arrow[d,"\tau"]\\
 \Bl_{Z}Y \arrow[r, "\phi_z" ]
 & Y
\end{tikzcd}
\end{center}
Let $\xi$ and $\xi'$ be the tautological bundles on $\mathbb{P}(E)$ and
$\mathbb{P}(\phi^{\ast}E)$ respectively, and let
$$A_{z_i}'\,=\,\widetilde{\phi_z}^{-1}(\tau^{-1})(z_i)$$ for each $1\,\leq\, i \,\leq\, n$.
Then the \textit{multipoint Seshadri constant} of $E$ at $Z$, denoted by $\varepsilon(E,\,Z)$,
is defined as follows:
$$
\varepsilon(E,\,Z)\,\,:=\,\,\sup\bigl\{\lambda\,>\,0\,\,\big\vert\,\, \xi'-\lambda\sum\limits_{i=1}^n
A_{z_i}'\ \text{ is\, nef}\bigr\}.
$$
\end{defi}

It is straight-forward to verify the following equivalent formulation: $$\varepsilon(E,\,Z) 
\,\,=\,\,\inf\limits_{\mathcal{C}_{\tau,Z}}\left\{\frac{\xi \cdot C}{\sum\limits_{i=1}^n 
\mult_{z_i} \tau_{*}C}\right\},$$ where $\mathcal{C}_{\tau,Z}$ is the set of all 
irreducible curves $C\,\subset\, \mathbb{P}(E)$ such that
\begin{itemize}
\item $C\bigcap \tau^{-1}(Z) \,\neq\, \emptyset$, i.e., $\sum\limits_{i=1}^n \mult_{z_i} \tau_{*}C
\,> \,0$, and

\item $C \,\nsubseteq \,\tau^{-1}(Z)$.
\end{itemize}
Note that when $Z$ is a singleton, the above definition recovers the usual single point Seshadri constant of a vector bundle. 

The single point Seshadri constants of vector bundles over smooth curves have a particularly nice description.
For a nef vector bundle $E$ over a smooth curve $Y$, for any point $y\,\in\, Y$, we have 
$$\varepsilon(E,\,y)\,\,=\,\,\inf\limits_{C\in \mathcal{C}_{\tau,y}} \Bigl\{ \frac{\xi\cdot C}{\mult_y \tau_*C}\Bigr\}
\,=\, \mu_{\min}(E),$$
where $\mathcal{C}_{\tau,y}$ is the set of curves $C\,\subset\, \mathbb{P}(E)$ such that $C$
intersects the fiber $\tau^{-1}(y)$ but it is not contained in $\tau^{-1}(y)$.

\begin{xrem}\label{xrem4}
Let $C$ be an integral curve and $Z\,=\,\{z_1,\,\cdots,\,z_n\}\, \subset\, C$
a finite subset. Then $\mult_{z_i}C\,\geq\, 1$ for each $1\,\leq \,i \,\leq \,n$.
Set $\nu \,:\, \widetilde{C} \,\longrightarrow\, C$ to be the normalization.
For a vector bundle $E$ over $C$, consider the following fiber product diagram
 \begin{center}
\begin{tikzcd} 
\Bl_{\eta^{-1}(Z)}\mathbb{P}(E) \,=\,\mathbb{P}(\nu^{\ast}E) \arrow[r, "\widetilde{\nu}"] \arrow[d, "\widetilde{\eta}"]
 & \mathbb{P}(E) \arrow[d,"\eta"]\\
 \widetilde{C} \arrow[r, "\nu" ]
 & C
\end{tikzcd}
\end{center}
where $\mathbb{P}(E)$ is the projective bundle over $C$ associated to $E$ and $\eta$ is
the natural map. Let $\xi$ and $\xi'$ denote the numerical classes of
$\mathcal{O}_{\mathbb{P}(\nu^{\ast}E)}(1)$ and $\mathcal{O}_{\mathbb{P}(E)}(1)$ respectively.
Then $$\widetilde{\nu}^{\ast}(\xi)\,=\,\xi'.$$ Let $B' \,\subset\, \mathbb{P}(\nu^{\ast}E)$
(respectively, $B\,\subset\, \mathbb{P}(E)$) be a curve which is not in any fiber
of $\widetilde{\eta}$ (respectively, $\eta$). Suppose that $B'$ is the strict transform
of $B$ under the blow up map $\widetilde{\nu}$. Then $$B'\cdot \xi'\,=\, B\cdot \xi.$$
Hence for any point $y\,\in\, \widetilde{C}$, we have
\begin{align*}
\varepsilon(E,\,Z)\,=\,\inf\limits_{B\in{\mathcal{C}_{\eta,Z}}}\left\{ \frac{\xi\cdot B}{\sum\limits_{i=1}^n \mult_{z_i} \eta_{*}B}\right\} =\frac{1}{\sum\limits_{i=1}^n \mult_{z_i} C} \left\{\inf\limits_{y\in \nu^{-1}(Z)}\inf\limits_{B'\in \mathcal{C}_{\widetilde{\eta},y}} \Bigl\{ \frac{\xi'\cdot B'}{\mult_y\widetilde{\eta}_*B'}\Bigr\}\right\}\\=\frac{1}{\sum\limits_{i=1}^n \mult_{z_i} C}\Bigl\{\inf\limits_{y\in \nu^{-1}(Z)} \varepsilon(\nu^{\ast}E,\,y)\Bigr\} \,=\,\frac{1}{\sum\limits_{i=1}^n \mult_{z_i} C}\Bigl\{\inf\limits_{y\in \nu^{-1}(Z)} \mu_{\min}(\nu^*E)\Bigr\} 
\,=\,\frac{\mu_{\min}(\nu^{\ast}E)}{\sum\limits_{i=1}^n \mult_{z_i} C}.
\end{align*}
\end{xrem}

The next result describes how parabolic Seshadri constants can be computed using 
restriction to curves.

\begin{thm}\label{thm3.2}
Let $E_{\ast}$ be a parabolic nef vector bundle on a smooth irreducible complex projective
variety $X$, and let $E'\,\longrightarrow\, Y$ be the corresponding orbifold bundle over
$Y$ (see Section \ref{orbifold}). Then 
\begin{align*}
\varepsilon_{\ast}(E_{\ast},\,x) \,\,=\,\, N(E_{\ast})\cdot\inf\limits_{C\subset Y}
\left\{\frac{\mu_{\min}(\nu^{\ast}E')}{\sum\limits_{y\in \gamma^{-1}(x)}\mult_y C}\right\},
\end{align*}
where the infimum is taken over all irreducible curves $C\,\subset\, Y$ such that
$C\bigcap \gamma^{-1}(x) \,\neq \,\emptyset$, and $\nu \,:\, \overline{C}
\,\longrightarrow\, C$ is the normalization map.
\end{thm}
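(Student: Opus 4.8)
The plan is to reduce the statement to Theorem \ref{thm2.2} together with the curve-restriction computation carried out in Remark \ref{xrem4}. Recall from Theorem \ref{thm2.2} that
\begin{align*}
\varepsilon_{\ast}(E_{\ast},\,x)\,=\, N(E_{\ast})\cdot \inf\limits_{C\in \mathcal{C}_{\tau,\gamma^{-1}(x)}} \left\{ \frac{\xi'\cdot C}{\sum\limits_{y\in \gamma^{-1}(x)}\mult_y\tau_{\ast}C}\right\},
\end{align*}
where $\tau\,:\,\mathbb{P}(E')\,\longrightarrow\, Y$ is the projectivization of the orbifold bundle and $\mathcal{C}_{\tau,\gamma^{-1}(x)}$ consists of the irreducible curves in $\mathbb{P}(E')$ meeting $\tau^{-1}(\gamma^{-1}(x))$ but not contained in it. So it suffices to prove that
$$\inf\limits_{C\in \mathcal{C}_{\tau,\gamma^{-1}(x)}} \left\{ \frac{\xi'\cdot C}{\sum\limits_{y\in \gamma^{-1}(x)}\mult_y\tau_{\ast}C}\right\} \,=\, \inf\limits_{C\subset Y}\left\{\frac{\mu_{\min}(\nu^{\ast}E')}{\sum\limits_{y\in \gamma^{-1}(x)}\mult_y C}\right\},$$
the infimum on the right being over irreducible curves $C\subset Y$ with $C\cap\gamma^{-1}(x)\neq\emptyset$.

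First I would establish the inequality $\geq$. Given an irreducible curve $B\subset\mathbb{P}(E')$ in $\mathcal{C}_{\tau,\gamma^{-1}(x)}$, set $C=\tau(B)\subset Y$, an irreducible curve meeting $\gamma^{-1}(x)$. Let $\nu\,:\,\overline{C}\longrightarrow C$ be the normalization and form the fiber product $\mathbb{P}(\nu^{\ast}E')=\mathbb{P}(E'|_C)\times_C\overline{C}$, with $\widetilde\nu$ the induced map to $\mathbb{P}(E'|_C)\subset\mathbb{P}(E')$ and $\widetilde\eta$ the projection to $\overline{C}$. Since $\widetilde\nu^{\ast}\xi' = \xi'_{\overline{C}}$ (the tautological class on $\mathbb{P}(\nu^{\ast}E')$) and, as in Remark \ref{xrem4}, strict transforms preserve the intersection with the tautological class, we get $\xi'\cdot B = \xi'_{\overline{C}}\cdot B'$ for the strict transform $B'$. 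The curve $B'$ lies in $\mathcal{C}_{\widetilde\eta,\,y}$ for each $y$ over the relevant points, so $\xi'_{\overline{C}}\cdot B' \geq \mu_{\min}(\nu^{\ast}E')$ by the formula $\varepsilon(\nu^{\ast}E',y)=\mu_{\min}(\nu^{\ast}E')$ recalled for curves. Combined with $\sum_{y\in\gamma^{-1}(x)}\mult_y\tau_{\ast}B \leq (\deg B\to C)\cdot\sum_{y\in\gamma^{-1}(x)}\mult_y C$ — and more carefully tracking that the same multiplicity bookkeeping appears on both sides as in Remark \ref{xrem4} — this yields $\tfrac{\xi'\cdot B}{\sum_y\mult_y\tau_{\ast}B}\geq\tfrac{\mu_{\min}(\nu^{\ast}E')}{\sum_y\mult_y C}$, giving one inequality after taking infima.

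For the reverse inequality $\leq$, fix an irreducible curve $C\subset Y$ with $C\cap\gamma^{-1}(x)\neq\emptyset$ and its normalization $\nu\,:\,\overline{C}\longrightarrow C$. Pick $y_0\in\overline{C}$ lying over a point of $\gamma^{-1}(x)$, and choose a curve $B'\subset\mathbb{P}(\nu^{\ast}E')$ meeting the fiber $\widetilde\eta^{-1}(y_0)$, not contained in it, computing (or approaching) $\varepsilon(\nu^{\ast}E',y_0)=\mu_{\min}(\nu^{\ast}E')=\xi'_{\overline{C}}\cdot B'$. Push $B'$ forward to $B=\widetilde\nu(B')\subset\mathbb{P}(E')$: it lies in $\mathcal{C}_{\tau,\gamma^{-1}(x)}$, and the same intersection-preservation and multiplicity bookkeeping from Remark \ref{xrem4} identify $\tfrac{\xi'\cdot B}{\sum_y\mult_y\tau_{\ast}B}$ with $\tfrac{\mu_{\min}(\nu^{\ast}E')}{\sum_y\mult_y C}$. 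Taking infima completes the argument. The main obstacle I anticipate is the bookkeeping of multiplicities when $\gamma^{-1}(x)$ is more than one point and $C$ passes through several of them with various local branches: one must check that $\sum_{y\in\gamma^{-1}(x)}\mult_y\tau_{\ast}B$ and $\sum_{y\in\gamma^{-1}(x)}\mult_y C$ are related by exactly the factor (the degree of $B\to C$, together with the sum over branches of $\overline{C}$ over each point of $\gamma^{-1}(x)$) that makes the two ratios literally equal, rather than merely comparable — this is where the argument of Remark \ref{xrem4}, applied with $Z=\gamma^{-1}(x)\cap C$, must be invoked carefully and perhaps branch by branch on $\overline{C}$.
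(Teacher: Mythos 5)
Your proposal is correct and follows essentially the same route as the paper: reduce to Theorem \ref{thm2.2}, group the curves in $\mathcal{C}_{\tau,\gamma^{-1}(x)}$ according to their image curve $C=\tau(B)$ in $Y$, and evaluate the inner infimum over each $\mathbb{P}(E'\big\vert_C)$ via the normalization and the identity $\varepsilon(\nu^{\ast}E',y)=\mu_{\min}(\nu^{\ast}E')$ as in Remark \ref{xrem4}. The paper phrases this as the single decomposition $\mathcal{C}_{\tau,\gamma^{-1}(x)}=\bigcup_{C\subset Y}\mathcal{C}_{\tau_c,\gamma^{-1}(x)}$ turning the infimum into a double infimum, rather than your two separate inequalities, but the content (including the degree-of-$B\to C$ bookkeeping you flag) is the same.
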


\begin{proof}
Let $C\,\subset\, Y$ be an irreducible curve such that $C\bigcap \gamma^{-1}(x)
\,\neq\, \emptyset$. Consider the following diagram:
\begin{center}
\begin{tikzcd}
\mathbb{P}(E'\big\vert_C) \arrow[r, hook] \arrow[d, "\tau_c"]
 & \mathbb{P}(E') \arrow[d,"\tau"]\\
 C \arrow[r, hook ]
 & Y
\end{tikzcd}
\end{center}
Recall that $\mathcal{C}_{\tau,\gamma^{-1}(x)}$ is the set of all irreducible curves $W$ in
$\mathbb{P}(E')$ such that $W\,\nsubseteq \,\tau^{-1}(\gamma^{-1}(x))$ and
$\sum\limits_{y\in \gamma^{-1}(x)}\mult_y\tau_{\ast}W \,>\, 0$. 

Similarly, we denote by $\mathcal{C}_{\tau_c,\gamma^{-1}(x)}$ the set of all irreducible 
curves $B \,\subset \,\mathbb{P}(E'\big\vert_C)$ such that
$$
B\,\nsubseteq\, \tau_c^{-1}(C 
\cap\gamma^{-1}(x))$$ and $\sum\limits_{y\in \gamma^{-1}(x)}\mult_y{\tau_c}_{\ast}B \,>\, 0$. 
Note that
\begin{equation}\label{ao}
\mathcal{C}_{\tau,\gamma^{-1}(x)} \,=\, \bigcup\limits_{C\subset Y} 
\mathcal{C}_{\tau_c,\gamma^{-1}(x)},
\end{equation}
where the union is taken over all irreducible curves $C$ in $Y$ such that $C\bigcap
\gamma^{-1}(x) \,\neq\, \emptyset$. From Theorem \ref{thm2.2} we have
$$
\varepsilon_{\ast}(E_{\ast},\,x) \,\,=\,\, N(E_{\ast})\cdot\inf\limits_{C\in
\mathcal{C}_{\tau,\gamma^{-1}(x)}} \left\{ \frac{\xi'\cdot C}{\sum\limits_{y\in
\gamma^{-1}(x)}\mult_y\tau_{\ast}C}\right\}.
$$
Using \eqref{ao} we have
$$\varepsilon_{\ast}(E_{\ast},\,x)\,\,=\,\, N(E_{\ast})\cdot
\inf\limits_{C \subset Y} \inf\limits_{B\in \mathcal{C}_{\tau_c,\gamma^{-1}(x)}}
\left\{\frac{\xi'\cdot B}{\sum\limits_{y\in \gamma^{-1}(x)} \mult_y{\tau_c}_{\ast}B}\right\},
$$
where the infimum is taken over all irreducible curves $C$ in $Y$ such that $C\bigcap 
\gamma^{-1}(x)\,\neq\, \emptyset$.

Now observe that for an irreducible curve $C \,\subset\, Y$ with $C\bigcap \gamma^{-1}(x)
\,\neq \,\emptyset$, using Remark \ref{xrem4} it follows that
$$\inf\limits_{B\in\mathcal{C}_{\tau_c,\gamma^{-1}(x)}} \left\{\frac{\xi'\cdot
B}{\sum\limits_{y\in \gamma^{-1}(x)} \mult_y{\tau_c}_{\ast}B}\right\}
\,=\, \frac{\mu_{\min}(\nu^{\ast}E')}{\sum\limits_{y\in \gamma^{-1}(x)}\mult_y C}.$$
This completes the proof. 
\end{proof}

For a parabolic vector bundle $E_{\ast}$ of rank $r$ we define its \textit{parabolic 
discriminant}, denoted by $\triangle_{par}(E_{\ast})$, as follows:
\begin{equation}\label{pd}
\triangle_{par}(E_{\ast}) \,:= \,2rc_2(E_{\ast}) - (r-1)c_1^2(E_{\ast}).
\end{equation}

\begin{xrem}\label{par-disc}
Let $E_{\ast}$ be a parabolic nef vector bundle on $X$. Then, by Theorem \ref{thm3.2}, for 
any point $x\,\in\, X$
\begin{align*}
\varepsilon_{\ast}(E_{\ast},\,x) \,=\, N(E_{\ast})\cdot \inf\limits_{C\subset Y} \left\{\frac{\mu_{\min}(\nu^{\ast}E')}{\sum\limits_{y\in \gamma^{-1}(x)}\mult_y C}\right\},
\end{align*}
where the infimum is taken over all irreducible curves $C\,\subset\, Y$ such that
$C\bigcap \gamma^{-1}(x)\,\neq\, \emptyset$, and $\nu\,:\, \overline{C} \,\longrightarrow\,
C$ is the normalization map. Note that
$$\mu_{\min}(\nu^{\ast}E')\,\leq\, \mu(\nu^{\ast}E') \,=\,(\text{det}(E')\cdot C)/\rank(E').$$ 
Thus
$$\varepsilon_{\ast}(E_{\ast},\,x)\,\leq\, \frac{N(E_{\ast})}{\rank(E_{\ast})}
\inf\limits_{C\subset Y}\left\{\frac{\det(E')\cdot C}{\sum\limits_{y\in
\gamma^{-1}(x)} \mult_yC}\right\} \,=\, \frac{N(E_{\ast})}{\rank(E_{\ast})}
\varepsilon\bigl(\det(E'),\,\gamma^{-1}(x)\bigr).$$
Here $\varepsilon\bigl(\det(E'),\,\gamma^{-1}(x)\bigr)$ denotes the multipoint Seshadri 
constants of $E'$ at $\gamma^{-1}(x)\, \subset\, Y$. Note that the rank of the parabolic vector 
bundle $E_{\ast}$ is simply the rank of the underlying vector bundle $E$.
 
Moreover, if $E_{\ast}$ is a semistable parabolic ample vector bundle with 
$\triangle_{par}(E_{\ast})\, =\, 0$ (see \eqref{pd}), then $E'$ is also semistable ample 
bundle on $Y$ with vanishing discriminant. Hence in this case $$\mu_{\min}(\nu^{\ast}E')\,=\,
\mu(\nu^{\ast}E')$$ (see \cite[Theorem 1.2]{BB08}). Therefore, in this case
we get the following equality:
$$\varepsilon_{\ast}(E_{\ast},\,x) \,=\, \frac{N(E_{\ast})}{\rank(E_{\ast})}
\varepsilon\bigl(\det(E'),\,\gamma^{-1}(x)\bigr)$$
for every point $x\,\in\, X$.
\end{xrem}

We now describe the parabolic Seshadri constants of symmetric powers and tensor products of 
parabolic vector bundles.

\begin{thm}\label{thm-sym}
Let $E_{\ast}$ be a parabolic nef vector bundle on a smooth irreducible complex
projective variety $X$. Then for any positive integer $m$
$$\varepsilon_{\ast}(S^m(E_{\ast}),\,x)\,=\, m\varepsilon_{\ast}(E_{\ast},\,x)$$ for every
point $x\,\in\, X$.
\end{thm}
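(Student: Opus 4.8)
The plan is to reduce the statement to a known relation between the tautological bundles of the projectivizations of $E'$ and $S^m(E')$, where $E'$ is the orbifold bundle associated to $E_{\ast}$. First I would observe that the parabolic symmetric power $S^m(E_{\ast})$ corresponds, under the parabolic--orbifold dictionary of Section \ref{orbifold}, to the orbifold bundle $S^m(E')$ on $Y$ (the $\Gamma$-action on $E'$ induces one on $S^m E'$, and taking $\Gamma$-invariants of twisted direct images commutes with forming symmetric powers). I would also need to track the normalizing integer: $N(S^m(E_{\ast}))$ divides a multiple of $N(E_{\ast})$ coming from the isotropy groups acting on $\mathcal{O}_{\mathbb{P}(S^m E')}(1)$ versus $\mathcal{O}_{\mathbb{P}(E')}(m)$. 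The cleanest route is to use Theorem \ref{thm3.2}, which expresses $\varepsilon_{\ast}(E_{\ast},x)$ as $N(E_{\ast})\cdot\inf_{C\subset Y}\{\mu_{\min}(\nu^{\ast}E')/\sum_{y\in\gamma^{-1}(x)}\mult_y C\}$, so that the whole problem becomes a statement purely about ordinary vector bundles on $Y$.

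With that reduction in hand, the key computation is: for a vector bundle $F$ on a smooth curve $\overline{C}$ (here $F=\nu^{\ast}E'$), one has $\mu_{\min}(S^m F)=m\,\mu_{\min}(F)$. This is standard: semistability is preserved under symmetric (indeed tensor) powers in characteristic zero, so if $0\subset F_1\subset\cdots\subset F_k=F$ is the Harder--Narasimhan filtration with slopes $\mu_1>\cdots>\mu_k$, then the induced filtration on $S^m F$ has associated graded pieces that are direct sums of tensor products of the $\mathrm{gr}_i F$, and the minimal slope among these is realized by $S^m(\mathrm{gr}_k F)$, of slope $m\mu_k=m\,\mu_{\min}(F)$; one checks all other graded pieces have strictly larger slope. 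Combining with $\nu^{\ast}S^m(E')=S^m(\nu^{\ast}E')$ and the fact that $\mu_{\min}$ of a bundle equals $\mu_{\min}$ of its pullback under the normalization of a curve, Theorem \ref{thm3.2} gives
\begin{align*}
\varepsilon_{\ast}(S^m(E_{\ast}),\,x)\,&=\,N(S^m(E_{\ast}))\cdot\inf\limits_{C\subset Y}\left\{\frac{\mu_{\min}(\nu^{\ast}S^m(E'))}{\sum\limits_{y\in\gamma^{-1}(x)}\mult_y C}\right\}\\
&=\,N(S^m(E_{\ast}))\cdot m\cdot\inf\limits_{C\subset Y}\left\{\frac{\mu_{\min}(\nu^{\ast}E')}{\sum\limits_{y\in\gamma^{-1}(x)}\mult_y C}\right\}.
\end{align*}

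The main obstacle, and the step needing the most care, is the bookkeeping with $N(E_{\ast})$: one must show $N(S^m(E_{\ast}))=N(E_{\ast})$, or at least that the ratio $N(S^m(E_{\ast}))/N(E_{\ast})$ is exactly absorbed so that the final constant is $m\,\varepsilon_{\ast}(E_{\ast},x)$. This follows from the description in Section \ref{proj}: the isotropy subgroups $G_z$ are the same for $\mathbb{P}(S^m E')$ as for $\mathbb{P}(E')$ (they come from the same ramified $\mathrm{GL}$-bundle structure, only the representation changes), and by definition $N(\cdot)$ is the l.c.m.\ of the orders $n_x$ of these groups, which is unchanged. Hence $N(S^m(E_{\ast}))=N(E_{\ast})$, and comparing the displayed formula above with the expression for $\varepsilon_{\ast}(E_{\ast},x)$ from Theorem \ref{thm3.2} yields $\varepsilon_{\ast}(S^m(E_{\ast}),x)=m\,\varepsilon_{\ast}(E_{\ast},x)$. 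An alternative, avoiding the curve reduction, would be to work directly with $\mathbb{P}(S^m E')$ and the Veronese-type relation it bears to $\mathbb{P}(E')$, but routing through Theorem \ref{thm3.2} keeps everything in terms of the already-established slope machinery and is the approach I would write up.
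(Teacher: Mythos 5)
Your argument follows essentially the same route as the paper's: identify $S^m(E_{\ast})$ with the orbifold bundle $S^m(E')$ on $Y$, apply Theorem \ref{thm3.2}, and use that symmetric powers commute with pullback together with $\mu_{\min}(S^mF)\,=\,m\,\mu_{\min}(F)$ for a vector bundle $F$ on a smooth curve. The only difference is that you explicitly check $N(S^m(E_{\ast}))\,=\,N(E_{\ast})$ --- a normalization the paper passes over silently by simply writing $N(E_{\ast})$ in the formula for $\varepsilon_{\ast}(S^m(E_{\ast}),\,x)$ --- which is welcome extra care rather than a genuinely different approach.
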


\begin{proof}
Let $E' \,\longrightarrow\, Y$ be the corresponding orbifold bundle over $Y$ as in Section 
\ref{orbifold}. For any positive integer $m$, the parabolic bundle corresponding to the orbifold bundle
$S^m(E')$ is the parabolic symmetric product $S^m(E_{\ast})$ (see \cite{B97I}).
Therefore, from Proposition \ref{prop-na} it follows that $S^m(E_{\ast})$ is nef if
$E_{\ast}$ is nef. Thus by Theorem \ref{thm3.2}, 
\begin{align*}
\varepsilon_{\ast}(S^m(E_{\ast}),\,x)\,\,=\, \,N(E_{\ast})\cdot \inf\limits_{C\subset Y}
\left\{\frac{\mu_{\min}\bigl(\nu^{\ast}(S^mE')\bigr)}{\sum\limits_{y\in
\gamma^{-1}(x)}\mult_y C}\right\},
\end{align*}
where the infimum is taken over all irreducible curves $C\,\subset\, Y$ such that
$C\bigcap \gamma^{-1}(x) \,\neq \,\emptyset$, and $\nu \,:\, \overline{C}
\,\longrightarrow\, C$ is the normalization map.
 
As taking symmetric powers commutes with the operation of pullback, and $$\mu_{\min}(S^mV)
\,=\, m\mu_{\min}(V)$$ for a vector bundle $V$ on a smooth complex projective curve $\overline{C}$, we have
\begin{align*}
\varepsilon_{\ast}(S^m(E_{\ast}),\,x) \,= \,N(E_{\ast})\cdot \inf\limits_{C\subset Y}
\left\{\frac{m\mu_{\min}\bigl(\nu^{\ast}E'\bigr)}{\sum\limits_{y\in
\gamma^{-1}(x)}\mult_y C}\right\} \,=\, m\varepsilon_{\ast}(E_{\ast},\,x).
\end{align*}
This completes the proof.
\end{proof}

\begin{thm}\label{thm-tensor}
Let $E_{\ast}$ and $F_{\ast}$ be two parabolic nef vector bundles on a smooth irreducible
complex projective variety $X$ having a common parabolic divisor $D\,\subset\, X$. Then for
every point $x\,\in\, X$,
\begin{align*}
\varepsilon_{\ast}(E_{\ast}\otimes F_{\ast},\,x)\,\,\geq\,\, N(E_{\ast}\otimes F_{\ast})
\cdot\left\{\frac{\varepsilon_{\ast}(E_{\ast},\,x)}{N(E_{\ast})}\,+\,
\frac{\varepsilon_{\ast}(F_{\ast},\,x)}{N(F_{\ast})}\right\}.
\end{align*}
\end{thm}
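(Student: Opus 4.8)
The plan is to reduce the statement to a computation about the orbifold bundles via Theorem \ref{thm3.2}, exactly as in the proof of Theorem \ref{thm-sym}. First I would pass to the Galois cover $\gamma\colon Y\longrightarrow X$ (which can be chosen simultaneously for $E_{\ast}$ and $F_{\ast}$ since they share the parabolic divisor $D$) and let $E'$, $F'$ be the orbifold bundles on $Y$ associated to $E_{\ast}$ and $F_{\ast}$. The key compatibility fact, established in the references on parabolic tensor products, is that the orbifold bundle associated to $E_{\ast}\otimes F_{\ast}$ is $E'\otimes F'$; hence by Proposition \ref{prop-na} the parabolic bundle $E_{\ast}\otimes F_{\ast}$ is parabolic nef, so $\varepsilon_{\ast}(E_{\ast}\otimes F_{\ast},\,x)$ is defined. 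Then Theorem \ref{thm3.2} gives
\begin{align*}
\varepsilon_{\ast}(E_{\ast}\otimes F_{\ast},\,x)\,=\, N(E_{\ast}\otimes F_{\ast})\cdot
\inf\limits_{C\subset Y}\left\{\frac{\mu_{\min}\bigl(\nu^{\ast}(E'\otimes F')\bigr)}{\sum\limits_{y\in\gamma^{-1}(x)}\mult_y C}\right\},
\end{align*}
with the infimum over irreducible curves $C$ meeting $\gamma^{-1}(x)$, and $\nu\colon\overline{C}\longrightarrow C$ the normalization.

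The heart of the argument is the identity $\mu_{\min}(\nu^{\ast}E'\otimes\nu^{\ast}F')=\mu_{\min}(\nu^{\ast}E')+\mu_{\min}(\nu^{\ast}F')$ for vector bundles on the smooth projective curve $\overline{C}$. Since pullback commutes with tensor product, $\nu^{\ast}(E'\otimes F')=\nu^{\ast}E'\otimes\nu^{\ast}F'$, so this reduces to a statement about tensor products of bundles on a curve. I would invoke the classical fact that over a smooth projective curve the Harder--Narasimhan filtrations behave multiplicatively under tensor product in the relevant extremal sense: $\mu_{\min}(V\otimes W)=\mu_{\min}(V)+\mu_{\min}(W)$ (equivalently, tensor product of semistable bundles on a curve in characteristic zero is semistable, together with the behaviour of the minimal HN piece under tensoring). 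With this, the numerator splits, and combining with the displayed formula and with Theorem \ref{thm3.2} applied to $E_{\ast}$ and $F_{\ast}$ separately yields
\begin{align*}
\varepsilon_{\ast}(E_{\ast}\otimes F_{\ast},\,x)\,=\, N(E_{\ast}\otimes F_{\ast})\cdot\inf\limits_{C\subset Y}\left\{\frac{\mu_{\min}(\nu^{\ast}E')+\mu_{\min}(\nu^{\ast}F')}{\sum\limits_{y\in\gamma^{-1}(x)}\mult_y C}\right\}.
\end{align*}

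The last step is to relate the infimum of a sum of two quotients with common denominator to the two individual infima, after normalizing by $N(E_{\ast})$ and $N(F_{\ast})$. The subtlety here is that an infimum of a sum is in general at most the sum of infima, so one does not automatically get equality; the point is that for a nef bundle on a curve $\mu_{\min}(\nu^{\ast}E')=\varepsilon(\nu^{\ast}E',y)$ for any $y\in\overline{C}$, and by Theorem \ref{thm3.2} the quantity $N(E_{\ast})^{-1}\varepsilon_{\ast}(E_{\ast},x)$ is precisely $\inf_{C}\{\mu_{\min}(\nu^{\ast}E')/\sum_y\mult_y C\}$. One must check that the same curves $C\subset Y$ are relevant for $E_{\ast}$, $F_{\ast}$, and $E_{\ast}\otimes F_{\ast}$ (they are — the index set $\{C\subset Y : C\cap\gamma^{-1}(x)\neq\emptyset\}$ does not depend on the bundle, nor does the denominator $\sum_{y}\mult_y C$), so that the infimum of the sum over this fixed family equals the sum of the infima divided appropriately; more precisely, writing $a_C=\mu_{\min}(\nu^{\ast}E')$, $b_C=\mu_{\min}(\nu^{\ast}F')$ and $d_C=\sum_y\mult_y C$, one wants $\inf_C\frac{a_C+b_C}{d_C}=\inf_C\frac{a_C}{d_C}+\inf_C\frac{b_C}{d_C}$, which requires an argument and is the main obstacle: it should follow because the quotient $a_C/d_C$ is, up to the factor $N(E_{\ast})/|\Gamma|$, a Seshadri-type constant that is realized (or approached) along curves coming from a common family — one can reduce to curves $C$ with $d_C=1$ (strict transforms under suitable covers, or by the normalization trick of Remark \ref{xrem4}) on which the two infima are simultaneously attained, or alternatively appeal directly to \cite[Theorem 3.1]{Hac00}/\cite{Hac00} type additivity for Seshadri constants of nef bundles on curves. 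Once this combinatorial identity is in place, dividing by $N(E_{\ast})$ and $N(F_{\ast})$ and multiplying back by $N(E_{\ast}\otimes F_{\ast})$ gives the claimed formula.
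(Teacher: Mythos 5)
Your proposal follows the paper's proof step for step: pass to the common Galois cover $\gamma\colon Y\to X$, use the compatibility of the parabolic--orbifold correspondence with tensor products so that $E'\otimes F'$ corresponds to $E_{\ast}\otimes F_{\ast}$, apply Theorem \ref{thm3.2} to $E_{\ast}\otimes F_{\ast}$, and split the numerator using $\mu_{\min}(V\otimes W)=\mu_{\min}(V)+\mu_{\min}(W)$ for bundles on the smooth curve $\overline{C}$. Up to that point the two arguments are identical and those steps are sound.

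The step you single out as ``the main obstacle'' is precisely the step the paper performs silently: writing $a_C=\mu_{\min}(\nu^{\ast}E')$, $b_C=\mu_{\min}(\nu^{\ast}F')$, $d_C=\sum_{y\in\gamma^{-1}(x)}\mult_y C$, the paper passes in one line from $\inf_C\bigl\{(a_C+b_C)/d_C\bigr\}$ to $\inf_C\{a_C/d_C\}+\inf_C\{b_C/d_C\}$. You are right to worry: only the inequality
\begin{equation*}
\inf_C\left\{\frac{a_C+b_C}{d_C}\right\}\;\geq\;\inf_C\left\{\frac{a_C}{d_C}\right\}+\inf_C\left\{\frac{b_C}{d_C}\right\}
\end{equation*}
is formal, and none of your suggested repairs closes the gap. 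Hacon's theorem concerns a single curve, where there is nothing to interchange, and there is no reason the two infima should be approached along a common sequence of curves $C\subset Y$. Indeed, in the rank-one case with trivial (or very small) parabolic structure the statement degenerates to additivity of ordinary Seshadri constants of nef line bundles under tensor product, which fails: on $\mathbb{P}^1\times\mathbb{P}^1$ with $L_1\equiv(1,2)$ and $L_2\equiv(2,1)$ one has $\varepsilon(L_1,x)=\varepsilon(L_2,x)=1$ but $\varepsilon(L_1\otimes L_2,x)=3$, because the two individual infima are computed by the two different rulings. So what your argument (and, as written, the paper's) actually establishes is
\begin{equation*}
\varepsilon_{\ast}(E_{\ast}\otimes F_{\ast},\,x)\;\geq\; N(E_{\ast}\otimes F_{\ast})\cdot\left\{\frac{\varepsilon_{\ast}(E_{\ast},\,x)}{N(E_{\ast})}+\frac{\varepsilon_{\ast}(F_{\ast},\,x)}{N(F_{\ast})}\right\},
\end{equation*}
not the claimed equality. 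Your instinct to flag this interchange was correct; it is a genuine gap that cannot be patched by reducing to curves with $d_C=1$ or by citing additivity on a single curve.
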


\begin{proof}
The correspondence between parabolic vector bundles and orbifold bundles takes tensor 
product of two orbifold bundles to the tensor product of the corresponding parabolic vector 
bundles (see \cite{B97I}). Let $E'$ and $F'$ be the orbifold bundles on $Y$ corresponding
to $E_*$ and $F_*$ respectively, where $Y$ is the 
Galois cover of $X$ as in Section \ref{orbifold}. Then $E'\otimes F'$ corresponds to the 
parabolic bundle $E_{\ast}\otimes F_{\ast}$.
By Theorem \ref{thm3.2}, for any point $x\,\in\, X$, we have
$$\varepsilon_{\ast}(E_{\ast}\otimes F_{\ast},\,x) \,\,=\,\, N(E_{\ast}\otimes F_{\ast})
\cdot\inf\limits_{C\subset Y} \left\{\frac{\mu_{\min}\bigl(\nu^{\ast}(E'\otimes F')\bigr)}{\sum\limits_{y\in \gamma^{-1}(x)}\mult_y C}\right\},
$$
where the infimum is taken over all irreducible curves $C\,\subset\, Y$ such that $C\bigcap 
\gamma^{-1}(x)\,\neq\, \emptyset$, and $\nu \,:\, \overline{C} \,\longrightarrow\, C$ is the 
normalization map (see Theorem \ref{thm3.2}).
For the smooth curve $\overline{C}$, we also have
$$\mu_{\min}\bigl(\nu^{\ast}(E'\otimes F')\bigr)\, =\, \mu_{\min}(\nu^{\ast}E') + \mu_{\min}(\nu^{\ast}F').$$
Therefore,
$$\varepsilon_{\ast}(E_{\ast}\otimes F_{\ast},\,x) \,=\, N(E_{\ast}\otimes F_{\ast})\cdot
\inf\limits_{C\subset Y} \left\{\frac{\mu_{\min}(\nu^{\ast}E')}{\sum\limits_{y\in
\gamma^{-1}(x)}\mult_y C} \,+\, \frac{\mu_{\min}(\nu^{\ast}F')}{\sum\limits_{y\in
\gamma^{-1}(x)}\mult_y C}\right\}$$
$$ \geq \, N(E_{\ast}\otimes F_{\ast})\cdot \left\{\frac{\varepsilon_{\ast}(E_{\ast},\,
x)}{N(E_{\ast})}\,+\,\frac{\varepsilon_{\ast}(F_{\ast},\,x)}{N(F_{\ast})}\right\}.$$ 
This completes the proof.
\end{proof}

\subsection{Examples and further questions}\label{examples}

Let $X$ be a smooth complex projective variety. Let $E_{\ast}$ be a parabolic vector bundle 
on $X$ with $E$ as the underlying vector bundle. If $X$ has dimension one, and $E$ is ample,
then it 
follows that $E_{\ast}$ is parabolic ample; see \cite[Theorem 3.1]{B97I}. But in general 
this is not the case. We will give two examples to show that the parabolic ampleness of 
$E_{\ast}$ does not imply the ampleness of $E$ and vice versa.

Let $D$ be an effective divisor on $X$ and suppose that 
$D\,=\, \sum\limits_{i=1}^nD_i$ is the decomposition of $D$ into irreducible components.
A parabolic line bundle on $X$ with parabolic divisor $D$ is a pair $L_{\ast} \,=\,
\bigl( L,\,\{\alpha_1,\,\cdots,\,\alpha_n\}\bigr)$, where
$L$ is a line bundle on $X$ and the parabolic weight $0\,\leq\,\alpha_i\,<\,1$ corresponds to the divisor $D_i$. 
Assume that $\alpha_i \,\in\, \mathbb{Q}$ for all $i$. Then $L_{\ast}$ is parabolic ample if
and only if 
\begin{align*}
c_1(L_{\ast}) \,:=\, c_1(L) + \sum\limits_{i=1}^n \alpha_i[D_i] \,\in\, H^{1,1}(X)
\cap H^2(X,\mathbb{Q})
\end{align*}
lies in the ample cone of $X$. 

\begin{exm}\label{example1}
Let $\pi \,: \,\mathbb{P}_C(W)\,\longrightarrow \,C$ be a ruled surface over an elliptic curve
$C$ defined by the normalized rank 2 bundle $W$ which sits in the following exact sequence:
$$0\,\longrightarrow\, \mathcal{O}_C \,\longrightarrow\, W \,\longrightarrow
\,\mathcal{O}_C(p) \,\longrightarrow \,0$$ for some point $p\,\in\, C$. Therefore the
associated invariant $e \,=\, -\deg(W)$ is $-1$ (see \cite[Proposition 2.8]{Har1}).
We consider the simple normal crossing divisor $\sigma+f$, where $\sigma$ denotes the 
normalized section of $\pi$ such that $\mathcal{O}_{\mathbb{P}(W)}(\sigma)\, \simeq\,
\mathcal{O}_{\mathbb{P}(W)}(1)$ and $f$ denotes a fiber of the map $\pi$.
 
Let $L$ be a line bundle on $\mathbb{P}_C(W)$ which is numerically equivalent to $\frac{1}{2}\sigma - \frac{1}{2} f$. Then $L$ is not ample by 
\cite[Proposition 2.21]{Har1}.
Next consider the parabolic bundle $$L_* \,= \,(L,\{\alpha_1,\,\alpha_2\})$$
on $\mathbb{P}_C(W)$ with parabolic weights $\alpha_1 \,=
\, \frac{1}{2}$ and $\alpha_2 \,=\,\frac{1}{2}$. 
Note that $$c_1(L_*) \,=\, c_1(L) + \alpha_1\sigma +\alpha_2 f \,=\, \sigma$$
which is inside the ample cone of $\mathbb{P}_C(W)$.
Thus $L_*$ is parabolic ample, but $L$ is not ample. 
\end{exm}

\begin{exm}\label{example2} 
Let $\pi \,:\, \mathbb{P}_C(W)\,\longrightarrow\, C$ be a ruled surface over 
$\mathbb{P}^1_{\mathbb{C}}$ defined by the normalized rank 2 bundle $W \,=\, 
\mathcal{O}_{\mathbb{P}^1_{\mathbb{C}}}\oplus \mathcal{O}_{\mathbb{P}^1_{\mathbb{C}}}(-1)$. 
Then the associated invariant $e$ is equal to $1$. As done above, we consider the simple normal 
crossing divisor $\sigma+f$, where $\sigma$ denotes the normalized section of $\pi$ such 
that $\mathcal{O}_{\mathbb{P}(W)}(\sigma) \,\simeq\, \mathcal{O}_{\mathbb{P}(W)}(1)$ and $f$ 
denotes a fiber of the map $\pi$.
 
Let $L\,\equiv\, \frac{1}{2}\sigma + f$ be a line bundle on $\mathbb{P}_C(W)$. Then $L$ is 
ample by \cite[Proposition 2.20]{Har1}. Next consider the parabolic bundle $$L_* \,=\,
(L,\,\{\alpha_1,\,\alpha_2\})$$ with parabolic weights $\alpha_1 \,
=\, \frac{7}{8}$ and $\alpha_2 \,=\,\frac{1}{4}$.
Note that $$c_1(L_*) \,=\, c_1(L) + \alpha_1\sigma +\alpha_2 f \,=\,
\frac{11}{8}\sigma + \frac{10}{8}f$$ which does not lie in the ample cone of $\mathbb{P}_C(W)$.
Thus $L_*$ is not parabolic ample, but $L$ is ample.
\end{exm}

In view of Example \ref{example1} and Example \ref{example2}, using the Seshadri criterion for 
parabolic ampleness (see Theorem \ref{seshadri-crit}) we see that there can not be an 
inequality between $\varepsilon_{\ast}(L_{\ast},\,x)$ and $\varepsilon(L,\,x)$, in general. 
It is still interesting to ask the following question.

\begin{question}\label{quest1}
Suppose $E_{\ast}$ is an ample parabolic bundle on a projective variety $X$ such that the underlying vector
bundle $E$ is also ample. Then can we compare $\varepsilon_{\ast}(E_{\ast},\,x)$ and $\varepsilon(E,\,x)$?
\end{question}

\begin{exm}
Let $Z$ be a smooth complex projective variety, and let $W$ be a vector bundle on $Z$. 
Let $$\rho\,:\,X \,:=\, \mathbb{P}(W)\,\longrightarrow\, Z$$ be the associated projective bundle on $Z$. 
Take $D \,\subset\, Z$ to be a normal crossing divisor on $Z$, and let $F_*$ be a semistable 
parabolic bundle of rank $r$ on $Z$ with parabolic divisor $D$. Then $\rho^*F_*$ is a 
parabolic semistable bundle on $X$ with parabolic divisor $D' \,=\, \rho^*(D)$. Let $D' \,=\, 
\sum\limits_{i=1}^nD'_i$ be the decomposition of $D'$ into irreducible components.
 
 Let $L_{\ast}$ be a parabolic line bundle with parabolic divisor $D'$ on $X$. So
$L_{\ast}$ is given by a pair
$\bigl( L,\,\{\alpha_1,\,\cdots,\,\alpha_n\}\bigr)$, where
$L$ is a line bundle on $X$ and $0\,\leq\,\alpha_i\,<\,1$ corresponds to the divisor $D'_i$.
Assume that $\alpha_i \,\in\, \mathbb{Q}$ for all $i$. Then
$$E_{\ast}\,=\, \rho^{\ast}(F_{\ast}) \otimes L_{\ast}$$ is parabolic semistable with
$\triangle_{par}(E_{\ast}) \,=\, 0$. Note that
$$c_1(L_{\ast}) \,:=\, c_1(L) + \sum\limits_{i=1}^n \alpha_i[D_i].$$ We may choose
$L_{\ast}$ in such a way that $$c_1(E_{\ast}) \,=\, c_1(\rho^{\ast}F_{\ast}) + r c_1(L_{\ast})$$
lies in the ample cone of $X$. This way we can produce parabolic ample bundles on $X$ with
$\triangle_{par}(E_{\ast}) \,=\, 0$ (see \cite[Example 22]{MR21}). Using Remark
\ref{par-disc} we can compute the parabolic Seshadri constants for such parabolic bundles.
\end{exm}

In this situation, there is the following result on the relationship between the
parabolic ampleness of $E_{\ast}$ and the ampleness of its first parabolic Chern class.

\begin{thm}[{\cite[Theorem 19]{MR21}}]\label{thm5.1}\label{mr}
Let $E_{\ast}$ be a semistable parabolic vector bundle of rank $r$ on a smooth complex 
projective variety $X$ such that $\triangle_{par}(E_{\ast}) \,=\, 0$. Then $E_{\ast}$ is 
parabolic ample if and only if its parabolic first Chern class $c_1(E_{\ast})$ is in the 
ample cone of $X$.
\end{thm}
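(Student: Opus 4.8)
The plan is to move everything to the associated orbifold bundle and then reduce to a statement about ordinary vector bundles. Let $\gamma\colon Y\to X$ be the Galois cover with $\Gamma=\Gal(\gamma)$ and let $E'\to Y$ be the orbifold bundle associated with $E_{\ast}$, as in Section~\ref{orbifold}. By the compatibilities recalled there, $E_{\ast}$ is parabolic semistable iff $E'$ is semistable, $E_{\ast}$ is parabolic ample (resp.\ nef) iff $E'$ is ample (resp.\ nef) by Proposition~\ref{prop-na}, and $\gamma^{\ast}c_i(E_{\ast})=c_i(E')$ for all $i$; since $\gamma^{\ast}$ is injective on rational cohomology, $\triangle_{par}(E_{\ast})=0$ iff $\triangle(E'):=2rc_2(E')-(r-1)c_1^2(E')=0$. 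Finally $c_1(E')=c_1(\det E')=\gamma^{\ast}c_1(E_{\ast})$, and since $\gamma$ is finite surjective a $\mathbb{Q}$-class on $X$ is ample iff its pullback to $Y$ is; so $c_1(E_{\ast})$ lies in the ample cone of $X$ iff $\det E'$ is ample. Thus the theorem is equivalent to: \emph{a semistable vector bundle $V=E'$ on the smooth projective variety $Y$ with $\triangle(V)=0$ is ample iff $\det V$ is ample.} The direction ``$V$ ample $\Rightarrow\det V$ ample'' needs no hypothesis: $V^{\otimes r}$ is ample and $\det V=\bigwedge^{r}V$ is a quotient of it, hence ample; unwinding the dictionary, this proves that $c_1(E_{\ast})$ is ample whenever $E_{\ast}$ is parabolic ample.

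For the converse, suppose $\det V$ is ample. I would first show that $V$ is nef, equivalently that $E_{\ast}$ is parabolic nef (Proposition~\ref{prop-na}), so that the constructions of Sections~\ref{defn-sc}--\ref{properties-sc} apply. By \cite[Theorem~1.2]{BB08}, a semistable bundle with vanishing discriminant restricts to a semistable bundle on the normalization of every irreducible curve; hence for each irreducible curve $C\subset Y$ with normalization $\nu\colon\overline{C}\to C$ the bundle $\nu^{\ast}V$ is semistable of slope $\mu(\nu^{\ast}V)=\tfrac1r(\det V\cdot C)>0$, and a semistable bundle of positive slope on a smooth projective curve is ample; so $V|_{C}$ is ample for every irreducible curve $C\subset Y$ (ampleness being unaffected by passing to the normalization of $C$). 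Consequently $\mathcal{O}_{\mathbb{P}(V)}(1)$ meets every curve of $\mathbb{P}(V)$ positively --- a curve inside a fibre because $\mathcal{O}(1)$ is ample on fibres, and a curve dominating a curve $C\subset Y$ because it lies in $\mathbb{P}(V|_{C})$, on which $\mathcal{O}_{\mathbb{P}(V)}(1)$ restricts to the ample $\mathcal{O}_{\mathbb{P}(V|_{C})}(1)$. In particular $\mathcal{O}_{\mathbb{P}(V)}(1)$ is nef, so $V$ is nef and $E_{\ast}$ is parabolic nef.

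Now I would apply Theorem~\ref{thm3.2}. Since $\nu^{\ast}E'$ is semistable for every curve $C\subset Y$, one has $\mu_{\min}(\nu^{\ast}E')=\mu(\nu^{\ast}E')=\tfrac1r(\det E'\cdot C)$, so Theorem~\ref{thm3.2} gives, for every $x\in X$,
\[
\varepsilon_{\ast}(E_{\ast},\,x)\;=\;\frac{N(E_{\ast})}{\rank(E_{\ast})}\;\varepsilon\bigl(\det E',\,\gamma^{-1}(x)\bigr),
\]
the multipoint Seshadri constant of the ample line bundle $\det E'$ at the set $\gamma^{-1}(x)$, which has at most $\lvert\Gamma\rvert$ points. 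For an ample line bundle $A$ on $Y$ and a finite set $Z$, estimating $\sum_{z\in Z}\mult_{z}C\le\lvert Z\rvert\max_{z\in Z}\mult_{z}C$ against a point realizing the maximum gives $\varepsilon(A,Z)\ge\tfrac1{\lvert Z\rvert}\inf_{y\in Y}\varepsilon(A,y)$, and $\inf_{y\in Y}\varepsilon(\det E',y)>0$ by the classical Seshadri criterion since $\det E'$ is ample. Hence $\varepsilon_{\ast}(E_{\ast},x)\ge\frac{N(E_{\ast})}{\rank(E_{\ast})\lvert\Gamma\rvert}\inf_{y}\varepsilon(\det E',y)>0$ uniformly in $x$, so $\inf_{x\in X}\varepsilon_{\ast}(E_{\ast},x)>0$, and Theorem~\ref{seshadri-crit} shows $E_{\ast}$ is parabolic ample.

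I expect the one genuine obstacle to be the input that a semistable bundle with vanishing discriminant restricts semistably to every curve --- the equality case of Bogomolov's inequality, i.e.\ \cite[Theorem~1.2]{BB08} --- which rests on the Donaldson--Uhlenbeck--Yau/Bando--Siu theorem; the rest is a formal combination of the orbifold dictionary with the Seshadri-constant formulas already established. A more direct variant of the last two paragraphs, using the same deep input, is: after passing to a finite cover of $Y$ on which an $r$-th root $N$ of $\det V$ exists, the bundle $V_{0}:=V\otimes N^{-1}$ is semistable with $c_1(V_0)=0$ and $\triangle(V_0)=0$, hence numerically flat and in particular nef, so $V=V_{0}\otimes N$ is a nef bundle tensored with an ample line bundle and is therefore ample; descending along the cover shows $V$ is ample, i.e.\ $E_{\ast}$ is parabolic ample.
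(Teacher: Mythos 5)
The paper offers no proof of this statement at all: it is quoted verbatim from \cite[Theorem 19]{MR21}, so there is nothing internal to compare your argument against line by line. That said, your proof is correct, and it is interesting precisely because it derives the quoted theorem from the machinery developed in this paper rather than from the methods of \cite{MR21}. Your reduction to the orbifold bundle $E'$ is sound: semistability, ampleness/nefness (Proposition \ref{prop-na}), and Chern classes all transfer, $\gamma^{\ast}$ is injective on rational cohomology since $\gamma_{\ast}\gamma^{\ast}$ is multiplication by $\deg\gamma$, and ampleness of a $\mathbb{Q}$-class is preserved and detected by pullback under the finite surjective $\gamma$. The forward direction ($\det V$ a quotient of $V^{\otimes r}$) is standard. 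For the converse, your two steps --- first nefness of $V$ via ampleness of $V|_{C}$ on every curve (using \cite[Theorem 1.2]{BB08} to get semistability of $\nu^{\ast}V$ of positive slope), then the uniform positive lower bound $\varepsilon_{\ast}(E_{\ast},x)=\frac{N(E_{\ast})}{r}\varepsilon(\det E',\gamma^{-1}(x))\ge\frac{N(E_{\ast})}{r\lvert\Gamma\rvert}\inf_{y}\varepsilon(\det E',y)>0$ fed into Theorem \ref{seshadri-crit} --- are both valid, and the Seshadri-constant identity you use is exactly the one recorded in Remark \ref{par-disc} (which in the paper is stated only under the a priori assumption that $E_{\ast}$ is already parabolic ample; your observation that nefness suffices to invoke Theorem \ref{thm3.2} closes that loop). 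The deep input is the same in either route: the equality case of Bogomolov's inequality via \cite{BB08}. Your closing variant (twisting by an $r$-th root of $\det V$ on a further cover to get a numerically flat bundle, then tensoring back by an ample line bundle) is almost certainly closer in spirit to the actual proof in \cite{MR21}, and avoids the Seshadri-constant apparatus entirely; what the longer route buys is a quantitative statement, namely the explicit value of $\varepsilon_{\ast}(E_{\ast},x)$ in terms of the multipoint Seshadri constant of $\det E'$.
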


So one can ask the following question. 

\begin{question}\label{quest2}
In the situation of Theorem \ref{mr}, can we compare $\varepsilon_{\ast}(E_{\ast},\,x)$ and 
$\varepsilon\bigl(c_1(E_{\ast}),\,x\bigr)$?
\end{question}

\section*{Acknowledgements}
We are grateful to the referee for carefully reading the paper and giving several useful suggestions
which improved the paper.
The second and the fourth authors are partially supported by a grant from Infosys Foundation. 
The third author is supported financially by SERB-NPDF fellowship (File no: PDF/2021/000282).
The first author is partially supported by a J. C. Bose Fellowship.

\end{document}